\newcommand{\len}{\operatorname*{length}}
\newcommand{\bd}{\operatorname*{bd}}
\newcommand{\aff}{\operatorname*{aff}}
\newcommand{\R}{\mathbb{R}}
\newcommand{\Rt}{\mathbb{R}^3}
\newcommand{\Rn}{\mathbb{R}^{n}}
\newcommand{\Suno}{\mathbb{S}^{1}}
\title{Convex bodies with pairs of sections associated by reflections}
\author{E. Morales-Amaya}
\date{}
\newtheorem{theorem}{Theorem}
\newtheorem{lemma}{Lemma}
\newtheorem{corollary}{Corollary}
\newtheorem{remark}{Remark} 
\begin{document}
	
	\maketitle
	
	\begin{abstract}
		In this work we prove that if for a pair of convex bodies $K_1, K_2 \subset \mathbb{R}^n$, $n \geq 3$, there exists a hyperplane $H$ and two distinct points $p_1$ and $p_2$ in $\mathbb{R}^n \setminus H$ such that for every $(n-2)$-plane $M \subset H$, there exists a reflection mapping the hypersection of $K_1$ defined by $\mathrm{aff}\{p_1, M\}$ onto the hypersection of $K_2$ defined by $\mathrm{aff}\{p_2, M\}$, then there exists a reflection which maps $K_1$ onto $K_2$.
	\end{abstract}
	
\section{Introduction}

We begin by formulating two motivating questions.

Let $K_1$ and $K_2$ be convex bodies in $\mathbb{R}^n$ with $n \geq 3$, and let $p_1 \neq p_2$ be two distinct points. Suppose that for every hyperplane $\Pi$ passing through the origin in $\mathbb{R}^n$, there exists an affine transformation $T_\Pi : \mathbb{R}^n \to \mathbb{R}^n$ such that
\begin{equation}
	T_\Pi\big((\Pi + p_2) \cap K_1\big) = (\Pi + p_1) \cap K_2,
\end{equation}
and that the collection $\{T_\Pi\}$ forms a subgroup $\Omega$ of the affine group $\mathcal{A}(n)$ (which can be defined as the semi direct product of the orthogonal group $O(n)$ and $\mathbb{R}^n$).

\textbf{Question 1.} For which convex bodies $K_1, K_2 \subset \mathbb{R}^n$ and subgroups $\Omega \subset \mathcal{A}(n)$ satisfying the above condition for all hyperplanes $\Pi$ through the origin, can we conclude that there exists a transformation $T \in \Omega$ such that $T(K_2) = K_1$?

A particularly interesting case arises when $K_1 = K_2$ and $\Omega = O(n)$, the orthogonal group. In this situation, it is natural to conjecture that $K_1$ is a body of revolution.

\textbf{Question 2.} Let $K \subset \mathbb{R}^n$ be a convex body with $n \geq 3$, and let $p \in \mathbb{R}^n$. Suppose that every $(n-1)$-dimensional section of $K$ through $p$ has some geometric property $X$. For which properties $X$ can we conclude that $K$ itself possesses property $X$?

This second question is classical in the area of geometric tomography. In \cite{Rogers1965}, C.~A. Rogers made contributions to both questions. For Question 1, he showed that if $\Omega$ is the group of homotheties, then there exists a transformation $T \in \Omega$ such that $T(K_2) = K_1$. As a consequence, for Question 2, he proved that if property $X$ is central symmetry, then $K$ must be centrally symmetric.

Moreover, Rogers conjectured that if not all sections of $K$ through $p$ are centered at $p$, then the assumption of central symmetry characterizes ellipsoids. Related works include \cite{Aitchison1971}, \cite{Larman1974}, \cite{MontejanoMorales2003}, \cite{MontejanoMorales2007a}, \cite{MontejanoMorales2007b}.

The principal result of this paper provides a generalization of Question 1. Instead of examining pairs of parallel hyperplanes through two fixed points, we study pairs of hyperplanes that pass through two fixed points and a common $(n-2)$-dimensional plane contained in a given hyperplane.

We now introduce the necessary definitions.

Let $H \subset \mathbb{R}^n$ be a hyperplane. A mapping $S : \mathbb{R}^n \to \mathbb{R}^n$ is a \emph{reflection} with respect to $H$ (a mirror) if, for every point $x \in \mathbb{R}^n$, the point $S(x)$ lies on the line orthogonal to $H$ through $x$, at equal distance from $H$, and on the opposite side of $H$ from $x$. A convex body $K \subset \mathbb{R}^n$ is said to be \emph{symmetric} with respect to $S$ if $S(K) = K$.

Let $p_1, p_2 \in \mathbb{R}^n \setminus H$ be distinct points, and let $M \subset H$ be an $(n-2)$-plane. Define $\pi_1(M)$ and $\pi_2(M)$ as the affine hyperplanes generated by $M$ and $p_1$, and by $M$ and $p_2$, respectively.

\begin{theorem}\label{mainthm}
	Let $K_1, K_2 \subset \mathbb{R}^n$, $n \geq 3$, be strictly convex bodies, and let $H \subset \mathbb{R}^n$ be a hyperplane. Let $p_1, p_2 \in \mathbb{R}^n \setminus H$ be distinct. Suppose that for each $(n-2)$-plane $M \subset H$, there exists a reflection $S_M : \mathbb{R}^n \to \mathbb{R}^n$ with respect to a hyperplane through $M$ such that
	\[
	S_M\big(\pi_2(M) \cap K_2\big) = \pi_1(M) \cap K_1.
	\]
	Then there exists a reflection $S : \mathbb{R}^n \to \mathbb{R}^n$ with respect to $H$ such that
	\[
	S(K_2) = K_1.
	\]
         If $S(p_2)\not = p_1$, then $K_1,K_2$ are bodies of revolution.
\end{theorem}

\textbf{Structure of the paper.} Let $L$ be the line joining $p_1$ and $p_2$. The paper is organized as follows:
\begin{enumerate}
	\item Introduction (this section).
	\item Reduction of the general case to dimension 3.
	\item Auxiliary result: A characterization of the circle.
	\item Lemmas for $n = 3$ and $p_1 \notin K_1$.
	\item Lemmas for $n = 3$, $p_1 \in \mathrm{int}(K_1)$, and $L \not\perp H$.
	\item Lemmas for $n = 3$, $p_1 \in \mathrm{int}(K_1)$, and $L \perp H$.
	\item Proof of Theorem~\ref{mainthm} for $n = 3$.
\end{enumerate}

	\section{Reduction of the General Case to Dimension 3}
	
	Let $\mathbb{R}^n$ be the $n$-dimensional Euclidean space endowed with the standard inner product $\langle \cdot, \cdot \rangle : \mathbb{R}^n \times \mathbb{R}^n \to \mathbb{R}$. We choose an orthonormal coordinate system $(x_1, \dots, x_n)$ for $\mathbb{R}^n$. Define the closed unit $n$-ball by
	\[
	B(n) = \{x \in \mathbb{R}^n : \|x\| \leq 1\}, \quad S^{n-1} = \{x \in \mathbb{R}^n : \|x\| = 1\}
	\]
	as its boundary.
	
	Let $x, y \in \mathbb{R}^n$. Denote by $L(x, y)$ the line through $x$ and $y$, and by $[x, y]$ the line segment connecting them. For sets $A, B \subset \mathbb{R}^n$, let $\mathrm{aff}\{A, B\}$ be the affine hull of $A \cup B$. For any hyperplane $\Gamma \subset \mathbb{R}^n$, let $\psi_\Gamma : \mathbb{R}^n \to \Gamma$ denote the orthogonal projection onto $\Gamma$.
	
	\begin{lemma}
		Let $K_1, K_2 \subset \mathbb{R}^n$ be convex bodies, $n \geq 3$, and let $H \subset \mathbb{R}^n$ be a hyperplane. Suppose that for every hyperplane $\Gamma$ orthogonal to $H$, there exists a reflection $s : \Gamma \to \Gamma$ with respect to $H \cap \Gamma$ such that
		\[
		s(\psi_\Gamma(K_2)) = \psi_\Gamma(K_1).
		\]
		Then the reflection $S_H : \mathbb{R}^n \to \mathbb{R}^n$ with respect to $H$ satisfies
		\[
		S_H(K_2) = K_1.
		\]
	\end{lemma}
	
	\begin{proof}
		By hypothesis, for every hyperplane $\Gamma$ orthogonal to $H$, we have
		\[
		\psi_\Gamma(K_1) = s(\psi_\Gamma(K_2)) = \psi_\Gamma(S_H(K_2)) = \psi_\Gamma(W_1),
		\]
		where $W_1 := S_H(K_2)$. Therefore, $\psi_\Gamma(K_1) = \psi_\Gamma(W_1)$ for all $\Gamma$ orthogonal to $H$, and it follows that $K_1 = W_1 = S_H(K_2)$.
	\end{proof}
	
	\begin{corollary}
		Let $K \subset \mathbb{R}^n$ be a convex body and $H \subset \mathbb{R}^n$ a hyperplane. If for every hyperplane $\Gamma$ orthogonal to $H$ the projection $\psi_\Gamma(K)$ is symmetric with respect to $H \cap \Gamma$, then $K$ is symmetric with respect to $H$.
	\end{corollary}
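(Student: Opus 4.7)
The plan is to deduce this corollary directly from Lemma \ref{aunnoprobado} by setting $K_1=K_2=K$. The lemma was stated for two bodies, but once both bodies coincide, its hypothesis becomes exactly the symmetry of every projection $\psi_\Gamma(K)$ with respect to $H\cap \Gamma$, which is what the corollary assumes.

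Concretely, I would fix an arbitrary hyperplane $\Gamma\subset \Rn$ orthogonal to $H$ and let $s:\Gamma\to \Gamma$ denote the reflection (inside $\Gamma$) with respect to the $(n-2)$-plane $H\cap \Gamma$. The hypothesis of the corollary asserts that $\psi_\Gamma(K)$ is invariant under $s$, i.e.
\[
s(\psi_\Gamma(K))=\psi_\Gamma(K).
\]
Setting $K_1:=K$ and $K_2:=K$, this is the identity $s(\psi_\Gamma(K_2))=\psi_\Gamma(K_1)$ required by Lemma \ref{aunnoprobado}. Since the choice of $\Gamma$ was arbitrary, all hypotheses of that lemma are verified.

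Applying Lemma \ref{aunnoprobado} yields $S_H(K_2)=K_1$, i.e., $S_H(K)=K$, which is precisely the assertion that $K$ is symmetric with respect to the hyperplane $H$. There is essentially no obstacle here: the content of the corollary is entirely contained in the lemma, and the only step is recognizing that \emph{symmetric with respect to $H\cap \Gamma$} is the same as \emph{fixed by the reflection in $H\cap \Gamma$ inside $\Gamma$}, so one may legitimately take $K_1=K_2$ in the lemma.
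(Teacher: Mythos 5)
Your proposal is correct and matches the paper's intent exactly: the corollary is stated without proof precisely because it is the specialization $K_1=K_2=K$ of Lemma \ref{aunnoprobado}, which is the reduction you carry out. Nothing further is needed.
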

	
	\begin{lemma}
		Under the hypotheses of Theorem \ref{mainthm}, let $\Gamma$ be a hyperplane orthogonal to $H$. Then the sets $\psi_\Gamma(K_1)$, $\psi_\Gamma(K_2)$, $\psi_\Gamma(p_1)$, $\psi_\Gamma(p_2)$, and $\psi_\Gamma(H)$ satisfy the conditions of Theorem~1 in dimension $n - 1$.
	\end{lemma}
	
	\begin{proof}
		Assume $K_1, K_2, p_1, p_2$ and $H$ satisfy the conditions of Theorem~1. Let $M \subset \psi_\Gamma(H)$ be a $(n-3)$-dimensional plane. Choose $(n-2)$-planes $W_1, W_2 \subset \Gamma$ such that $M \subset W_i$ and $p_i \in W_i$ for $i = 1, 2$. By hypothesis, there exists a reflection $S : \mathbb{R}^n \to \mathbb{R}^n$ with respect to a hyperplane $\Omega$ containing $\psi_\Gamma^{-1}(M) \subset H$, such that
		\[
		S(\Pi_2 \cap K_2) = \Pi_1 \cap K_1,
		\]
		where $\Pi_i := \psi_\Gamma^{-1}(W_i)$ for $i = 1, 2$.
		
		Let $y \in W_2 \cap \psi_\Gamma(K_2) = \psi_\Gamma(\Pi_2) \cap \psi_\Gamma(K_2) = \psi_\Gamma(\Pi_2 \cap K_2)$. Define $\bar{y} := s(y)$ to be the image of $y$ under the reflection $s : \Gamma \to \Gamma$ with respect to $\psi_\Gamma(\Omega)$. We claim that
		\[
		\bar{y} \in W_1 \cap \psi_\Gamma(K_1) = \psi_\Gamma(\Pi_1) \cap \psi_\Gamma(K_1) = \psi_\Gamma(\Pi_1 \cap K_1).
		\]
		
		Since $y = \psi_\Gamma(x)$ for some $x \in \Pi_2 \cap K_2$, and $S(x) \in \Pi_1 \cap K_1$ by hypothesis, then $\bar{y} = \psi_\Gamma(S(x))$ lies in $\psi_\Gamma(\Pi_1 \cap K_1)$, completing the proof.
	\end{proof}
	
	\textbf{Inductive Step.} Assume Theorem \ref{mainthm} holds in dimension $n \geq 3$. Let $K_1, K_2 \subset \mathbb{R}^{n+1}$, $H \subset \mathbb{R}^{n+1}$, and $p_1, p_2 \in \mathbb{R}^{n+1} \setminus H$ satisfy the conditions of Theorem~1. Let $\Gamma$ be a hyperplane orthogonal to $H$. Then $\psi_\Gamma(K_1)$, $\psi_\Gamma(K_2)$, $\psi_\Gamma(p_1)$, $\psi_\Gamma(p_2)$, and $\psi_\Gamma(H)$ satisfy the conditions of Theorem~1 in dimension $n$. By the inductive hypothesis, there exists a reflection $S : \Gamma \to \Gamma$ with respect to $\psi_\Gamma(H)$ such that $S(\psi_\Gamma(K_2)) = \psi_\Gamma(K_1)$. By Lemma 1, this implies that $S_H(K_2) = K_1$ in $\mathbb{R}^{n+1}$.
	\qed
	
	
	\section{A Characterization of the Circle}
	
	To prove Theorem~1, we need a characterization of the circle, which we can informally state as follows:
	
	\textit{A convex domain in the plane, in which all inscribed rectangles have a pair of opposite sides passing through two fixed points, must be a circle.}
	
	This result will be formally presented in Lemma~3 below. Its proof uses a well-known characterization of central symmetry due to A. Rogers~\cite{Rogers1981}. Let $\text{length}(W)$ denote the length of the segment $W$.
	
	\begin{lemma}[Rogers]
		Let $M \subset \mathbb{R}^2$ be a convex domain and $a, b \in \operatorname{int}(M)$. Suppose that for every pair of parallel lines $A, B$ passing through $a$ and $b$, respectively, the equality
		\[
		\text{length}(A \cap M) = \text{length}(B \cap M)
		\]
		holds. Then $M$ is centrally symmetric, with center at the midpoint of the segment $[a, b]$.
	\end{lemma}
	
	We now provide the geometric construction that motivates the main result.
	
	In $\mathbb{R}^2$, consider a circle $C$ centered at the origin with radius $r$. Fix the point $(t, 0)$, where $0 < t < r$. For every unit vector $u$, let $M_u$ and $-M_u$ be the lines parallel to $u$ and passing through $(t, 0)$ and $(-t, 0)$, respectively. These lines define the vertical sides of a rectangle inscribed in $C$. Denote by $N_u$ and $-N_u$ the orthogonal lines (perpendicular to $u$) that form the other pair of rectangle sides. For each $u \in S^1$, define $\operatorname{l_{rt}}(u)$ to be the length of the chord $M_u \cap C$, and $\operatorname{d_{rt}}(u)$to be the distance between $N_u$ and the origin.
	
	Since the line orthogonal to $M_u$ and passing through the midpoint of $M_u \cap C$ goes through the origin, we have
	\[
	\operatorname{d_{rt}}(u) = \frac{1}{2} \operatorname{l_{rt}}(u).
	\]
	Thus, $\operatorname{d_{rt}}(u)$ is a non-zero, even, and continuous function on the unit circle $S^1$.
	
	\begin{lemma}
		Let $M \subset \mathbb{R}^2$ be a convex domain and let $a, b \in \operatorname{int}(M)$. Suppose that for every pair of parallel lines $A, B$ passing through $a$ and $b$, respectively, the chords $A \cap M$ and $B \cap M$ define a rectangle inscribed in the boundary $\partial M$. Then $M$ is a circle.
	\end{lemma}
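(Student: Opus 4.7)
The plan is to use the rectangle hypothesis first to extract central symmetry, then to extract a precise midpoint/perpendicularity relation, and finally to force $M$ to be a disk by a short dynamical argument on $\bd M$.

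First, since opposite sides of a rectangle are congruent, for every direction $u$ the two chords $A\cap M$ and $B\cap M$ of $M$ cut by the parallel lines through $a$ and $b$ have equal length. The characterization of Rogers quoted just before the lemma then yields that $M$ is centrally symmetric about $c:=\tfrac{1}{2}(a+b)$. I will place coordinates so that $c=0$, $a=(t,0)$ and $b=-a$ with $t>0$. The remaining content of the hypothesis---that the sides of the inscribed rectangle joining $A\cap M$ to $B\cap M$ are perpendicular to the chord direction $u$---combined with the central symmetry, forces the midpoint of the chord of $M$ through $a$ in direction $u$ to be the foot of the perpendicular from $c=0$ to that chord. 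A short computation then shows that if $P_{+}$ and $P_{-}$ are the two endpoints of any chord of $M$ through $a$, then $|P_{+}|=|P_{-}|$.

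Next, I introduce two continuous involutions of $\bd M$: let $\sigma(P)$ be the other endpoint of the chord of $M$ through $a$ and $P$, and let $\tau(P):=-P$ be the central reflection. Set $R(P):=|P|$. The equality $|P_{+}|=|P_{-}|$ says $R\circ\sigma=R$, and $R\circ\tau=R$ is immediate. Because $a,c\in\inte M$, both $\sigma$ and $\tau$ are fixed-point-free, hence each is topologically an antipodal map of $\bd M\cong \Suno$, and so both are orientation-preserving. Therefore $\rho:=\sigma\circ\tau$ is an orientation-preserving homeomorphism of $\bd M$ with $R\circ\rho=R$. I then compute its fixed points: $\rho(P)=P$ is equivalent to $-P$, $a$ and $P$ being collinear, and since the line through $P$ and $-P$ already passes through $c$, this forces $P\in L(a,c)$. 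Convexity gives $\bd M\cap L(a,c)=\{P_{1},P_{2}\}$, exactly two points, and these are the only fixed points of $\rho$, with common distance $|P_{1}|=|P_{2}|=:s$ from $c$.

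Finally, $P_{1}$ and $P_{2}$ split $\bd M$ into two arcs. Because $\rho$ is orientation-preserving and fixes both endpoints of each arc, it maps each open arc to itself, and on each arc it is a fixed-point-free orientation-preserving self-homeomorphism. Iteration of such a map carries every interior point monotonically to one of the two endpoints. By continuity of $R$ and the invariance $R\circ\rho=R$, every point of the arc satisfies $R(P)=s$. Hence $R\equiv s$ on $\bd M$ and $M$ is the disk of radius $s$ centered at $c$. The step I expect to require the most care is the dynamical argument with $\rho$: checking the orientations, verifying that $\rho$ preserves each arc rather than swapping them, and confirming that the monotone self-map of each arc has orbits accumulating at an endpoint. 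The identification of the fixed points of $\rho$ with $\bd M\cap L(a,c)$ via central symmetry is the geometric heart of the proof.
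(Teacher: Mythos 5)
Your argument is correct, and it takes a genuinely different route from the paper's. Both proofs begin identically, invoking Rogers' equichordal characterization to obtain central symmetry about $c=\tfrac12(a+b)$. From there the paper argues geometrically: it fixes one inscribed rectangle, takes its circumcircle $D_0$, and iterates a construction (pass from a vertex on the chord through $b$ to a new chord through $a$, etc.) producing a sequence of rectangles inscribed simultaneously in $D_0$ and in $\bd M$, whose vertices converge to the endpoints $c,d$ of $L(a,b)\cap\bd M$; this forces $D_0$ to be the circle on diameter $[c,d]$, and arbitrariness of the initial rectangle gives $M=D_0$. You instead distill the hypothesis into the single metric identity $|P_+|=|P_-|$ for the endpoints of any chord through $a$ (your verification that the connecting sides must be $[P_+,-P_-]$ and $[P_-,-P_+]$, since the other pairing would make the sides cross at $c$, is the right way to nail this down), and then run the dynamics of $\rho=\sigma\circ\tau$ on $\bd M$ to show the radius function is constant. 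The two iterations are secretly the same dynamical system --- the paper's step from $c_0$ to $c_1$ is exactly your $\rho$ up to conjugation by $\tau$ --- but your version makes rigorous precisely the point the paper leaves unproved, namely the convergence $c_n\to c$, $d_n\to d$: you identify the fixed points of $\rho$ as $\bd M\cap L(a,c)$, check orientation-preservation via the fixed-point-free involution argument, and get monotone convergence of orbits on each complementary arc. What the paper's approach buys is a very concrete picture (each rectangle's circumcircle is forced to be the same circle); what yours buys is a cleaner invariant ($R\circ\rho=R$ with $R$ continuous) and an airtight convergence step. One small presentational point: the phrase ``topologically an antipodal map'' is looser than needed --- all you use, correctly, is that a fixed-point-free homeomorphism of $\Suno$ must preserve orientation, since orientation-reversing ones always have fixed points.
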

	
	\begin{proof}
		By Rogers' Theorem, $M$ is centrally symmetric about the midpoint of $[a, b]$.
		
		Take a pair of parallel chords $[c_0, d_0]$ and $[c_0', d_0']$ of $M$ passing through $a$ and $b$, respectively. By hypothesis, the quadrilateral $c_0 c_0' d_0 d_0'$ is a rectangle. Let $D_0$ be its circumcircle.
		
		Let $c, d$ be the endpoints of the chord $L(a, b) \cap \partial M$. Choose $[c_0, d_0], [c_0', d_0']$ so that $c_0 \neq c \neq c_0'$ and $d_0 \neq d \neq d_0'$. Then, the line $L(a, d_0')$ must intersect $D_0$ at another point, say $c_1$. Similarly, the line $L(b, c_0)$ intersects $D_0$ at another point $d_1'$. Renaming $d_0'$ as $d_1$ and $c_0$ as $c_1'$, we obtain a new rectangle $c_1 d_1 c_1' d_1'$.
		
		We will show that $c_1 \in \partial M$ and hence $d_1' \in \partial M$ as well. Since $L(a, d_0')$ intersects $\partial M$ at some point $t$, and this point also lies on the line orthogonal to $L(b, c_0)$ passing through $c_0$, it follows that $t = c_1$. A similar argument confirms $d_1' \in \partial M$.
		
		Iterating this construction, we generate a sequence of rectangles inscribed in both $D_0$ and $M$. In particular, we obtain sequences $\{c_n\}, \{d_n\} \subset D_0 \cap \partial M$ such that $c_n \to c$ and $d_n \to d$ as $n \to \infty$. Hence, $c$ and $d$ lie on $D_0$.
		
		It follows that $D_0$ is the circle centered at the midpoint of $[a, b]$ with diameter $[c, d]$. Because the chords $[c_0, d_0]$ and $[c_0', d_0']$ were arbitrary, we conclude that $M = D_0$. Therefore, $M$ is a circle.
	\end{proof}
	
	\begin{figure}
    \centering
    \includegraphics[width=.88\textwidth]{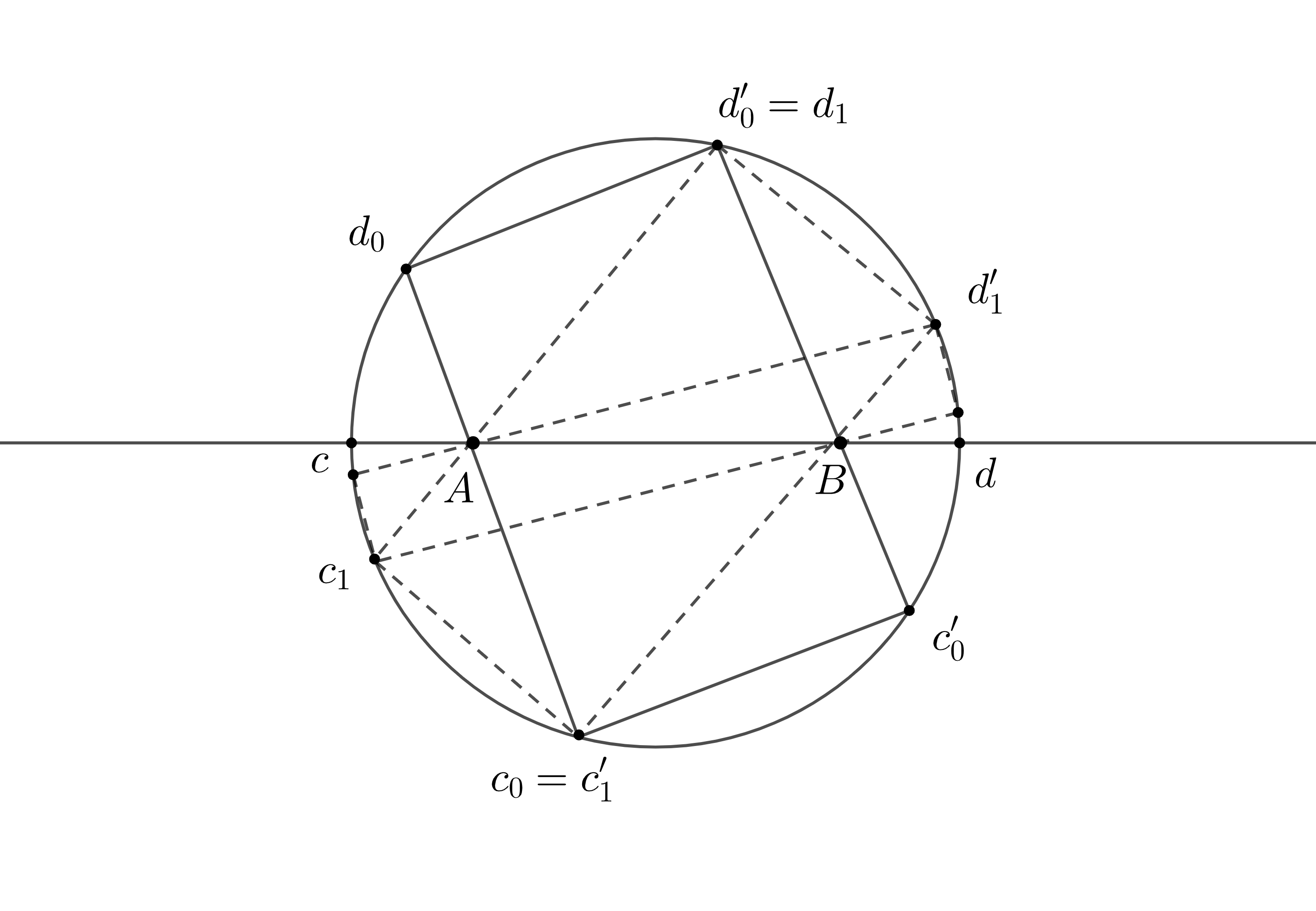}
    \caption{Convex domain with inscribed rectangles with pair of opposite sides passing through two fix points is a circle.}
    \label{santo}
\end{figure}
	
	\section{Lemmas for the Case $n = 3$ and $p_1 \notin K_1$}
	
	Assume that the convex bodies $K_1, K_2 \subset \mathbb{R}^3$, the points $p_1, p_2 \in \mathbb{R}^3 \setminus H$, and the plane $H \subset \mathbb{R}^3$ satisfy the hypotheses of Theorem~1. Let $L := L(p_1, p_2)$ be the line through $p_1$ and $p_2$.
	
	Suppose $L \cap H \neq \emptyset$ (otherwise, if $H$ and $L$ are parallel, we may replace $K_2$ with its reflection $S(K_2)$ with respect to $H$, so that the new line $\bar{L} := L(p_1, S(p_2))$ intersects $H$).
	
	Let $\Sigma := L \cap H$ and $S$ be the reflection with respect to the plane $H$. Choose a coordinate system such that $\Sigma$ is at the origin and $H$ is given by $z = 0$, with $p_1$ having positive $z$-coordinate and $p_2$ negative.
	
	\begin{lemma}\label{orto}
		If $p_1 \notin K_1$, then $p_2 \notin K_2$ and the line $L$ is perpendicular to $H$.
	\end{lemma}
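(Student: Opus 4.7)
My plan is to first establish two general facts that follow from the hypotheses of Theorem \ref{chido} (without using $p_1\notin K_1$), then to deduce $p_2\notin K_2$ via a shadow comparison, and finally to argue $L\perp H$ using the circle characterization of Lemma \ref{santo}.

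\textbf{Preliminaries.} For every line $M\subset H$ the mirror plane of $S_M$ contains $M$, so $S_M$ fixes $M$ pointwise; hence
\[
M\cap K_2 \;=\; S_M(M\cap K_2) \;\subseteq\; S_M\bigl(\pi_2(M)\cap K_2\bigr) \;=\; \pi_1(M)\cap K_1,
\]
and $M\cap K_2\subseteq M$ gives $M\cap K_2\subseteq M\cap K_1$; the reverse inclusion is symmetric (using $S_M=S_M^{-1}$). Thus $M\cap K_1=M\cap K_2$ for every $M\subset H$, and in particular $H\cap K_1=H\cap K_2$. Setting
\[
\Omega_i \;:=\; \{\, x\in H : L(p_i,x)\cap K_i\neq\emptyset \,\},
\]
which is a closed convex subset of $H$, a line $M\subset H$ meets $\Omega_i$ if and only if $\pi_i(M)\cap K_i\neq\emptyset$; since $S_M$ is a bijection between the two sections, emptiness is equivalent, so $\Omega_1$ and $\Omega_2$ share the same secant lines and hence $\Omega_1=\Omega_2$.

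\textbf{$p_2\notin K_2$.} Observe that $\Omega_i=H$ if and only if $p_i\in K_i$: the forward direction is clear since any line through a point of $K_i$ meets $K_i$; for the converse, if $p_i\notin K_i$ strict convexity produces a direction from $p_i$ missing $K_i$ whose trace on $H$ is not in $\Omega_i$. Since $p_1\notin K_1$ forces $\Omega_1\neq H$, by the previous step $\Omega_2\neq H$, so $p_2\notin K_2$.

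\textbf{$L\perp H$.} Suppose for contradiction that $L$ is not perpendicular to $H$, so that $\tilde L:=\psi_H(L)$ is a genuine line through $\Sigma$. For every $M\subset H$ through $\Sigma$, $\pi_1(M)=\pi_2(M)=:\pi(M)$, and $S_M|_{\pi(M)}$ is either the identity or the 2-dimensional reflection across $M$. A short computation shows that in the reflection case the associated 3D mirror of $S_M$ is $H$ itself when $M=\tilde L$, and the plane through $\Sigma$ perpendicular to $L$ when $M=\tilde L^\perp$ through $\Sigma$. Combining these two distinguished mirrors with the reflection data for lines $M\subset H$ not through $\Sigma$, together with $\Omega_1=\Omega_2$ and the tangent-point correspondence $q_1(s)=S_{M(s)}(q_2(s))$ at each $s\in\partial\Omega$ (where $M(s)$ is the tangent of $\Omega$ at $s$ and $q_i(s)$ is the silhouette point on $\partial K_i$ from $p_i$), one extracts inscribed-rectangle data with sides through the two distinguished points $\psi_H(p_1),\psi_H(p_2)$ on an appropriate planar convex domain. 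Invoking Lemma \ref{santo} forces that domain to be a circle whose center lies on the perpendicular bisector of $[\psi_H(p_1),\psi_H(p_2)]$, and tracing the resulting symmetry back through the tangent cones forces $\psi_H(p_1)=\psi_H(p_2)$, contradicting the standing assumption $\tilde L\neq\{\Sigma\}$. The main obstacle is precisely this last step: the first two stages are formal unwinding of the hypothesis, but converting the set-theoretic equality $\Omega_1=\Omega_2$ into a genuine angular statement about $L$ is substantive, and the bulk of the work lies in organizing the reflections from the pencil of planes through $L$ into the inscribed-rectangle data needed to apply Lemma \ref{santo}.
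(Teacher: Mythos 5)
Your preliminaries and the deduction $p_2\notin K_2$ are sound and essentially reproduce the first half of the paper's proof: the paper likewise shows that the central projections $G_1,G_2$ of $K_1,K_2$ from $p_1,p_2$ onto $H$ coincide (your $\Omega_1=\Omega_2$) by the same separating-line argument, and gets $p_2\notin \inte K_2$ from a degenerate section. (Minor caveat you share with the paper: $\Omega_i$ need not be closed or bounded when $K_i$ meets the plane through $p_i$ parallel to $H$, so the "same secant lines implies equal" step deserves a word.)

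The perpendicularity step, however, is a genuine gap, and you concede as much. You never identify the planar convex domain to which Lemma \ref{santo} is to be applied, you do not verify its hypotheses (the two points must be interior to the domain and \emph{every} pair of parallel chords through them must bound an inscribed rectangle --- neither is visible in the data you have assembled), and the closing step "tracing the resulting symmetry back through the tangent cones forces $\psi_H(p_1)=\psi_H(p_2)$" is an unexplained leap. In fact Lemma \ref{santo} is used by the paper only in the case $p_1\in\inte K_1$; for $p_1\notin K_1$ the paper runs a completely different, elementary extremal argument that your sketch is missing. Namely: pick $w$ in the open segment between the feet $\alpha=\psi_H(p_1)$ and $\beta=\psi_H(p_2)$ (these differ exactly when $L\not\perp H$); shrink a disc centered at $w$ onto $G_1$ to obtain a contact point $A\in\bd G_1$ whose supporting line $M_A$ meets $G_1$ only at $A$ and is perpendicular to $L(A,w)$. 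The tangency point $\tau$ of $K_2$ on the supporting line $L(p_2,A)$ then lies strictly on one side of the plane through $L(A,w)$ perpendicular to $H$, so its image under $S_{M_A}$ yields a second supporting line of $K_1$ through $p_1$ meeting $M_A$ at a point other than $A$, i.e.\ a second contact point of $M_A$ with $G_1$ --- contradicting the choice of $A$. Some such quantitative exploitation of the reflection along a single well-chosen line is what converts $\Omega_1=\Omega_2$ into the angular conclusion $L\perp H$, and it is absent from your proposal.
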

	
	\begin{proof}
		We first claim that $p_2 \notin \operatorname{int}(K_2)$. Let $G_1$ be the projection of $K_1$ from $p_1$ onto $H$. Let $M$ be a supporting line of $G_1$ touching it at a unique point. Then $\pi_1(M) \cap K_1$ is at most 1-dimensional. If $p_2 \in \operatorname{int}(K_2)$, then $\pi_2(M) \cap K_2$ would be 2-dimensional, contradicting the existence of a reflection mapping $\pi_2(M) \cap K_2$ onto $\pi_1(M) \cap K_1$. Thus, $p_2 \notin \operatorname{int}(K_2)$.
		
		Let $G_2$ be the projection of $K_2$ from $p_2$ onto $H$. We show $G_1 = G_2$. Assume otherwise, say $x \in G_1 \setminus G_2$. Without loss of generality, let $x \in \operatorname{int}(G_1)$. Let $W \subset H$ be a line separating $x$ from $G_2$ and let $M$ be a line parallel to $W$ through $x$. Since $x \in \operatorname{int}(G_1)$, $\pi_1(M) \cap K_1$ contains interior points. But $M \cap G_2 = \emptyset$, so $\pi_2(M) \cap K_2 = \emptyset$, a contradiction. Hence, $G_1 = G_2$.
		
		Suppose $L$ is not perpendicular to $H$. Let $\alpha, \beta$ be the projections of $p_1, p_2$ onto $H$, respectively. Since $L$ is not perpendicular to $H$, $\alpha \neq \beta$.
		
		We claim there exists a point $A \in \partial G_1$ and two perpendicular lines $M_A, N_A$ such that $M_A$ is a supporting line to $G_1$ with $M_A \cap \partial G_1 = \{A\}$, $N_A$ passes through $A$, and $N_A \cap \operatorname{int}[\alpha, \beta] \neq \emptyset$.
		
		Let $w \in \operatorname{int}[\alpha, \beta]$. Choose a disc $B \subset \mathbb{R}^2$ centered at $w$ such that $G_1 \subset \operatorname{int}(B)$ (this is possible since $G_1$ is compact). Shrink the radius of $B$ until $B \cap \partial G_1 \neq \emptyset$. Let $A \in B \cap \partial G_1$, and let $M_A$ be a supporting line of $B$ through $A$. Since $G_1 \subset B$, $M_A$ is also a supporting line of $G_1$. The tangency condition implies $M_A \cap \partial G_1 = \{A\}$.
		
		Let $N_A := L(A, w)$, and let $\Delta$ be the plane perpendicular to $H$ containing $N_A$, with open half-spaces $\Delta^+, \Delta^-$ such that $p_1 \in \Delta^+$ and $p_2 \in \Delta^-$. Define $\Gamma_1 := \operatorname{aff}\{M_A, p_1\}$, $\Gamma_2 := \operatorname{aff}\{M_A, p_2\}$, and let $S_M$ be the reflection with $S_M(\Gamma_2 \cap K_2) = \Gamma_1 \cap K_1$.
		
		Since $L(p_2, A)$ is a supporting line of $K_2$, it intersects $K_2$ at some $\tau \in \partial K_2 \cap \Delta^-$. Then $S_M(\tau) \in \partial K_1 \cap \Gamma_1$. Thus, $L(p_1, S_M(\tau))$ is a supporting line of $K_1$, but it cannot pass through $A$, contradicting the choice of $M_A$. Therefore, $L$ must be perpendicular to $H$.
	\end{proof}
	
	\textbf{Definition.} Let $K \subset \mathbb{R}^n$ be a convex body and $x \in \mathbb{R}^n \setminus K$. The union of all tangent lines of $K$ passing through $x$ is called the \emph{support cone} of $K$ with respect to $x$, denoted $C(K, x)$.
	
	\begin{lemma}\label{pastel}
		Suppose $p_1 \notin K_1$. Then $\|p_1 - \Sigma\| = \|p_2 - \Sigma\|$.
	\end{lemma}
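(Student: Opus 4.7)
My plan is to first invoke Lemma~\ref{orto}: under the hypothesis $p_{1}\notin K_{1}$ it gives $p_{2}\notin K_{2}$, $L\perp H$, and (as observed inside its proof) the common shadow $G:=G_{1}=G_{2}$ of $K_{1},K_{2}$ projected from $p_{1},p_{2}$ onto $H$. Choose coordinates so that $\Sigma$ is the origin and $L$ is the $z$-axis, writing $p_{1}=(0,0,h_{1})$ and $p_{2}=(0,0,-h_{2})$ with $h_{1},h_{2}>0$; it remains to show $h_{1}=h_{2}$.

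The strategy is to fix any line $M\subset H$ crossing $\inte G$ transversally in two points $A,B$ and to pin down $q:=S_{M}(p_{2})\in\pi_{1}(M)$ by two independent constraints. First, since the mirror plane $\Omega_{M}$ contains $M$, the segment $[p_{2},q]$ is perpendicular to $\Omega_{M}$ and in particular to $M$; using that $p_{2}$ projects orthogonally to $\Sigma\in H$, this forces $\psi_{H}(q)$ to lie on the line through $\Sigma$ perpendicular to $M$, namely on $L(\Sigma,F_{M})$, where $F_{M}$ is the foot of perpendicular from $\Sigma$ to $M$.

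For the second constraint, by strict convexity the sections $\pi_{i}(M)\cap K_{i}$ are $2$-dimensional and their tangent lines from $p_{i}$ are $L(p_{i},A),L(p_{i},B)$, touching $\bd K_{i}$ at unique points $A_{i},B_{i}$. Since $A\in L(p_{2},A_{2})$ and $A$ is fixed by $S_{M}$, the image $L(q,S_{M}(A_{2}))$ contains $A$; upon identifying $S_{M}(A_{2})=A_{1}$ (the ``matching'' step, discussed below), this image becomes $L(A,A_{1})=L(p_{1},A)$, so $q\in L(p_{1},A)$. Combining the two constraints, $\psi_{H}(q)\in L(\Sigma,A)\cap L(\Sigma,F_{M})=\{\Sigma\}$ for any generic $M$ with $A\neq F_{M}$, whence $q$ lies on $L$; but the only point of $L(p_{1},A)$ on $L$ is $p_{1}$ itself, so $q=p_{1}$. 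Then the isometry property of $S_{M}$ at the fixed point $A$ yields $|p_{1}-A|=|p_{2}-A|$, i.e.\ $|A|^{2}+h_{1}^{2}=|A|^{2}+h_{2}^{2}$, and hence $h_{1}=h_{2}$.

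The main obstacle is the matching identification $S_{M}(A_{2})=A_{1}$, which must rule out the swapped scenarios in which the image tangent point is $B_{1}$ or the alternative tangent point from $A$ to $\pi_{1}(M)\cap K_{1}$. This is an orientation argument: using that $p_{i}\notin K_{i}$ together with $G_{1}=G_{2}$, both $K_{i}$ and $p_{i}$ must lie on the same side of $M$ inside $\pi_{i}(M)$, so the mirror $\Omega_{M}$ is forced to be the dihedral bisector carrying the ``$p_{2}$-side'' of $\pi_{2}(M)$ onto the ``$p_{1}$-side'' of $\pi_{1}(M)$, and the induced correspondence of extremal boundary points then follows by continuity and strict convexity. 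A fallback, if $G$ happens to be a disc centred at $\Sigma$ (the only configuration in which a radial-tangent argument using tangent lines $M_A$ to $G$ fails to yield $\lambda_{1}(A)=\lambda_{2}(A)$), is to invoke the shadow-matching identity more carefully; in either route the identity $h_{1}=h_{2}$ will drop out.
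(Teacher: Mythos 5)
Your overall strategy is in the same spirit as the paper's (exploit $G_{1}=G_{2}$ from Lemma \ref{orto} and the fact that the mirror fixes the points of $M$ through which the tangent lines from $p_{1}$ and $p_{2}$ pass), but the execution hinges on the ``matching'' identification $S_{M}(A_{2})=A_{1}$, and that step, which you yourself flag as the main obstacle, is not established. Your proposed justification rests on the claim that $K_{i}$ and $p_{i}$ lie on the same side of $M$ inside $\pi_{i}(M)$; this is false in general, since nothing prevents $K_{i}$ from meeting $H$ on both sides of $M$ (e.g.\ a ball centred on $H$), in which case $\pi_{i}(M)\cap K_{i}$ straddles $M$. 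Worse, even when the section does lie on one side of $M$, the orientation argument does not decide the issue: the convex figure $\pi_{1}(M)\cap K_{1}$ has \emph{two} supporting lines through the fixed point $A$, namely $L(p_{1},A)$ and a second line $\ell_{A}$, and both have the figure on the same (``$p_{1}$'') side of $M$, so ``carrying the $p_{2}$-side onto the $p_{1}$-side'' cannot distinguish them. All your first constraint gives is that $L(q,A)$ and $L(q,B)$ are \emph{some} supporting lines of $\pi_{1}(M)\cap K_{1}$ through $A$ and $B$ with $\psi_{H}(q)$ on the perpendicular to $M$ through $\Sigma$; a priori $q$ could be one of several intersection points of the four candidate lines, and you have not excluded the spurious ones. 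The ``fallback'' sentence is too vague to count as an argument.

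For comparison, the paper avoids pinning down $q$ by two constraints: it takes $M$ \emph{through} $\Sigma$, so that $\pi_{1}(M)=\pi_{2}(M)=:\Pi$ is the vertical plane containing $L$ and $M$. Then the mirror of $S_{M}$, containing $M$ and mapping $\Pi$ to itself, is forced to be $H$ (or $\Pi$ itself), so $S_{M}$ restricted to $\Pi$ is the reflection $S$ about $H$; since $S$ carries the support lines of $\Pi\cap K_{2}$ from $p_{2}$ to the support lines of $\Pi\cap K_{1}$ from $p_{1}$, it sends $p_{2}$ to $p_{1}$, giving $\|p_{1}-\Sigma\|=\|p_{2}-\Sigma\|$ directly. (That argument still has to identify the image support lines with those through $p_{1}$, i.e.\ essentially your matching step, but the geometry is much more constrained because everything happens inside the single plane $\Pi$ and $q$ is already known to lie on $L$.) If you want to salvage your version, you should either restrict to $M\ni\Sigma$ as the paper does, or give a genuine continuity/connectedness argument (varying $M$) that rules out the swapped tangent points.
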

	
	\begin{proof}
		Assume $\|p_1 - \Sigma\| \neq \|p_2 - \Sigma\|$. From Lemma 4, $p_2 \notin K_2$ and $L$ is perpendicular to $H$. Let $M \subset H$ be a line through $\Sigma$. Then $\pi_1(M) = \pi_2(M) =: \Pi$, and the reflection $S_M$ becomes $S$, the reflection across $H$.
		
		The reflection $S$ maps the support cone $C(K_2, p_2)$ onto $C(K_1, p_1)$. That is, $S(p_2) = p_1$. But this contradicts our assumption that $\|p_1 - \Sigma\| \neq \|p_2 - \Sigma\|$. Hence, $\|p_1 - \Sigma\| = \|p_2 - \Sigma\|$.
	\end{proof}
	
	\begin{lemma}\label{rica}
		If $S(p_2) = p_1$, then $S(K_2) = K_1$.
	\end{lemma}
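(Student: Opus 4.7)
The plan is to prove the section equality $S(\Pi\cap K_2)=\Pi\cap K_1$ for every plane $\Pi$ containing $L$; the identity $S(K_2)=K_1$ then follows immediately, since every point of $\Rt$ lies on some plane through $L$.

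Fix a plane $\Pi\supset L$ meeting the interiors of $K_1$ and $K_2$. Since $L\perp H$ by Lemma \ref{orto}, we have $\Pi\perp H$, and the line $M:=\Pi\cap H$ passes through $\Sigma$. Because $p_1,p_2\in L\subset\Pi$, both $\pi_1(M)$ and $\pi_2(M)$ coincide with $\Pi$, so the hypothesis supplies a reflection $S_M$ across some plane $\Omega_M\supset M$ with $S_M(\Pi\cap K_2)=\Pi\cap K_1$. Since the image is $2$-dimensional in $\Pi$, $S_M$ must preserve $\Pi$ set-wise, which forces $\Omega_M$ to equal $\Pi$ itself or the unique plane through $M$ perpendicular to $\Pi$. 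The latter is $H$, because $\Pi\perp H$ and $\Pi\cap H=M$, so $\Omega_M\in\{\Pi,H\}$. When $\Omega_M=H$ we obtain $S_M=S$ and the desired identity $S(\Pi\cap K_2)=\Pi\cap K_1$ is immediate.

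The remaining case is $\Omega_M=\Pi$, in which $S_M\vert_\Pi=\mathrm{id}$ and one only gets $\Pi\cap K_1=\Pi\cap K_2=:D$; it is still necessary to prove $S(D)=D$, because $S\vert_\Pi$ is the reflection of $\Pi$ across $M$. To handle this, I would take lines $\widetilde M\subset H$ parallel to $M$, tending to $M$, and not passing through $\Sigma$. For each such $\widetilde M$, the planes $\pi_1(\widetilde M)$ and $\pi_2(\widetilde M)$ are distinct, and the same case analysis shows that the reflection $S_{\widetilde M}$ has reflection plane either equal to $H$ (so that $S_{\widetilde M}=S$) or perpendicular to $H$ through $\widetilde M$; this second family of planes tends to $\Pi$ as $\widetilde M\to M$. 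If $S_{\widetilde M}=S$ for some $\widetilde M$ arbitrarily close to $M$, then passing to the limit in $S(\pi_2(\widetilde M)\cap K_2)=\pi_1(\widetilde M)\cap K_1$ at once yields $S(D)=D$.

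The main obstacle is therefore to rule out the opposite scenario, in which $S_{\widetilde M}$ is the perpendicular-plane reflection for every $\widetilde M$ in a full neighborhood of $M$. I expect this to follow from $p_1\notin K_1$, the strict convexity of $K_1$ and $K_2$, and the equality of shadows $G_1=G_2=G$ established in the proof of Lemma \ref{orto}: in the perpendicular-plane case the reflection sends $p_2$ to a point $q_{\widetilde M}\neq p_1$ in $\pi_1(\widetilde M)$, producing a pair of support lines of the section $\pi_1(\widetilde M)\cap K_1$ from the apex $q_{\widetilde M}$; their foot-points on $M$ must coincide with the corresponding traces on $M$ of the support lines from $p_1$ (namely the endpoints of $M\cap G$), and this double tangency contradicts strict convexity of $K_1$ unless $q_{\widetilde M}=p_1$. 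Once $S(D)=D$ is secured, the sectional equality $S(\Pi\cap K_2)=\Pi\cap K_1$ holds for every $\Pi\supset L$ (the remaining tangent planes being handled by continuity), and taking the union over such $\Pi$ yields $S(K_2)=K_1$.
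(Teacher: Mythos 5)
Your reduction to planes $\Pi$ containing $L$, and the observation that the mirror of $S_M$ is forced to be either $\Pi$ itself or $H$, are correct; in the case mirror $=H$ your argument is exactly the paper's (which in fact passes straight to ``the restriction of $S$ to $\pi_1(M)$ is the reflection about $M$'' without ever acknowledging that the mirror could be $\Pi$, so you have isolated a subtlety the published proof glosses over). The genuine gap is in your treatment of the degenerate case $\Omega_M=\Pi$. Your limiting argument works if you can exhibit lines $\widetilde M\to M$ with $S_{\widetilde M}=S$, but the step that is supposed to exclude the alternative --- that for all nearby $\widetilde M$ the mirror is the plane through $\widetilde M$ perpendicular to $H$ --- is only a sketch, and the contradiction you sketch does not materialize. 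In that alternative, $q_{\widetilde M}:=S_{\widetilde M}(p_2)$ lies on the same side of $H$ as $p_2$ (a reflection in a plane perpendicular to $H$ preserves the signed distance to $H$), while $p_1$ lies on the opposite side; what you actually obtain is that the planar section $\pi_1(\widetilde M)\cap K_1$ admits, through each endpoint $a$ of $\widetilde M\cap G$, two distinct support lines, one from $p_1$ and one from $q_{\widetilde M}$. Since $a$ is in general an exterior point of that section (it lies on the boundary of the shadow in $H$, not on $K_1$), two distinct support lines through $a$ are perfectly compatible with strict convexity: a strictly convex figure inscribed in the quadrilateral with vertices $p_1$, $a$, $q_{\widetilde M}$, $b$ and touching all four sides is not a contradiction. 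So the decisive exclusion is asserted, not proved.

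A second, structural problem: your exclusion argument leans on $p_1\notin K_1$ and on the equality of shadows $G_1=G_2$ from Lemma \ref{orto}, but Lemma \ref{rica} is invoked again at the very end of the paper in the case $p_1\in \inte K_{1}$, $L$ perpendicular to $H$ and $\|p_1-\Sigma\|=\|p_2-\Sigma\|$ (and Lemma \ref{isabella2} is declared ``analogous'' to it), where there are no support lines of $K_1$ from $p_1$ at all. A proof of the lemma that can be used the way the paper uses it cannot rest on $p_1\notin K_1$. By contrast, the paper's own proof is a two-line identification of $S_M$ restricted to $\Pi=\pi_1(M)=\pi_2(M)$ with the in-plane reflection about $M=\Pi\cap H$, valid for any position of $p_1$ --- but silently ignoring the possibility $\Omega_M=\Pi$ that you correctly raise. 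To close your argument you would need either a correct proof that the mirror $\Pi$ cannot occur for a full neighborhood of lines $\widetilde M$, or a direct treatment of the situation $\Pi\cap K_1=\Pi\cap K_2$ for all $\Pi\supset L$.
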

	
	\begin{proof}
		Since $S(p_2) = p_1$, the line $L$ is perpendicular to $H$. For every line $M \subset H$ through $\Sigma$, the planes $\pi_1(M)$ and $\pi_2(M)$ coincide. Therefore, the restriction of the global reflection $S$ to $\pi_1(M)$ is simply the reflection across the line $M$.
		
		Thus, for each plane $\Pi$ through $L$, the section $\Pi \cap K_1$ is the reflection (across $H \cap \Pi$) of $\Pi \cap K_2$. Therefore, $S(K_2) = K_1$.
	\end{proof}
	
	\section{Lemmas for the Case $n = 3$, $p_1 \in \operatorname{int}(K_1)$, and $L \not\perp H$}
	
We assume that the convex body $K$, the points $p_1$, $p_2$ and the plane $H$ satisfies the condition of Theorem~1 for dimension 3. Let $L:=L(p_{1},p_{2})$. Suppose  that $p_{1}\in \operatorname{int}( K_{1})$, $L\cap H \not= \emptyset$ and $L$ is not orthogonal to $H$. We denote by $\Sigma$ the point $L\cap H$ and by $S$ the reflection with respect to the plane $H$. We take a system of coordinates such that $\Sigma$ is the origin and $H$ is given by the equation $z=0$. We choose the notation such that $p_{1}$ has positive coordinate $z$ and $p_{2}$ has negative coordinate $z$. 

Since $p_{1}\in \operatorname{int}(K_{1})$ is clear that $p_{2}\in \operatorname{int}( K_{2})$. Let $\Delta$ be the plane defined by $L$ and the unit normal vector of $H$, say $n$, and let $H_{i}$ be plane parallel to $H$ and passing through $p_{i}$, $i=1,2$. Let $N_1:=H_1 \cap K_1$, $N_2:=H_2 \cap K_2$.

The corresponding reflection between $N_{1}$ and $N_{2}$ is a translation defined by a vector $u$ parallel to $n$, i.e., $N_1=N_2+u$.
\begin{lemma}\label{dey}
The equality 
\begin{eqnarray}\label{trinidad}
\| p_1- \Sigma \|= \| p_{2}- \Sigma \|
\end{eqnarray}
holds.
\end{lemma}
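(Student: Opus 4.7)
My aim is to prove $\lambda = -1$, where $p_2 = \lambda p_1$ with $\lambda<0$; this is equivalent to the conclusion $\|p_1-\Sigma\|=\|p_2-\Sigma\|$. I set up coordinates $(s,z)$ on $\Delta$ centred at $\Sigma$, with $M_\Delta := \Delta\cap H$ as the $s$-axis and $n$ as the $z$-axis, so that $p_1=(s_1,z_1)$ and $p_2=(\lambda s_1,\lambda z_1)$ with $s_1,z_1>0$.

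Since $L\subset\Delta$ and $M_\Delta\subset\Delta$, applying the hypothesis with $M=M_\Delta$ gives $\pi_1(M_\Delta)=\pi_2(M_\Delta)=\Delta$, so $S_{M_\Delta}$ is an isometry of $\Rt$ preserving $\Delta$ and carrying $\Delta\cap K_2$ onto $\Delta\cap K_1$. Its mirror contains $M_\Delta\subset\Delta$, and the only two hyperplanes through $M_\Delta$ that preserve $\Delta$ are $\Delta$ itself and its orthogonal through $M_\Delta$; hence $S_{M_\Delta}$ restricted to $\Delta$ is either the identity or the planar reflection $\sigma:(s,z)\mapsto(s,-z)$. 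Because $u$ is parallel to $n$ and $n\in\Delta$, the vector $u$ lies in $\Delta$, and the auxiliary points $p_2+u=(\lambda s_1,z_1)\in N_1\subset K_1$ and $p_1-u=(s_1,\lambda z_1)\in N_2\subset K_2$ are both in $\Delta$. A short case analysis then shows that $\Delta\cap K_1$ contains four points forming an axis-aligned rectangle with $p_1$ as one vertex: in the identity sub-case the vertices are $p_1,\ p_2,\ p_2+u,\ p_1-u$; in the reflection sub-case the vertices are $p_1,\ p_2+u,\ \sigma(p_2)=(\lambda s_1,-\lambda z_1),\ \sigma(p_1-u)=(s_1,-\lambda z_1)$.

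I then apply the same analysis to the line $M_\perp\subset H$ through $\Sigma$ orthogonal to $M_\Delta$, which gives $\pi_1(M_\perp)=\pi_2(M_\perp)=\Pi:=\aff\{L,M_\perp\}$, a plane orthogonal to $\Delta$ and meeting $\Delta$ along $L$. Inside $\Pi$ the lines $L$ and $M_\perp$ are orthogonal at $\Sigma$, so the planar line reflection of $\Pi$ across $M_\perp$ sends each point of $L$ at signed distance $t$ from $\Sigma$ to the point at $-t$. Hence $S_{M_\perp}|_\Pi$ is either the identity (forcing $L\cap K_1=L\cap K_2$) or this reflection (forcing $L\cap K_1=-(L\cap K_2)$ as subsets of $L$); either way, the chords of $K_1$ and $K_2$ on $L$ are either identical or symmetric about $\Sigma$.

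The main obstacle, and the real content of the proof, is to combine the rectangle inside $\Delta\cap K_1$, the chord correspondence on $L$ coming from $\Pi$, and the strict convexity of $K_1,K_2$ so as to force $\lambda=-1$. I expect to exploit the strict concavity of the upper-boundary $s$-function of $\Delta\cap K_1$: the rectangle of the second paragraph, together with the translation relation $N_1=N_2+u$, produces two horizontal chords of $\Delta\cap K_1$ of the same $s$-extent at distinct heights unless $\lambda=-1$, and matching this against the $L$-chord symmetry coming from $\Pi$ should pin the two heights to be negatives of each other, forcing $\lambda=-1$. A cleaner alternative route would be to show that the reflection hypothesis, applied to all $M\subset H$ parallel to $M_\Delta$, yields Rogers's chord-length equality at the two interior points $p_1$ and $p_2+u$ of $\Delta\cap K_1$, hence central symmetry of $\Delta\cap K_1$ about the midpoint of $[p_1,p_2+u]$; a symmetric argument in $\Pi$ would force the centre of symmetry to lie on $L$, and the two facts together would again give $\lambda=-1$.
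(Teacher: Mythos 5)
Your preparatory observations are correct as far as they go: since $\Delta\cap K_1$ is two--dimensional, the mirror of $S_{M_\Delta}$ must map $\Delta$ to itself, so $S_{M_\Delta}|_\Delta$ is the identity or the reflection $\sigma$ across $M_\Delta$; the analogous statement for $S_{M_\perp}$ and $\Pi$ also holds. But the proof is not finished: the whole deductive content is deferred to your final paragraph, which only describes what you \emph{expect} to show, in two mutually inconsistent sketches. Moreover, the rectangle you exhibit in $\Delta\cap K_1$ carries no information. Its four vertices ($p_1$, $p_2+u\in\rint N_1$, and the remaining two, which lie in $\rint N_2$ or in $\sigma(\Delta\cap\inte K_2)$) are all relative interior points of the convex set $\Delta\cap K_1$, and a convex set contains every rectangle spanned by interior points. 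The rectangle characterization that actually does work here, Lemma \ref{santo}, requires \emph{inscribed} rectangles, with all four vertices on the boundary; your construction produces none. Similarly, the chord correspondence on $L$ only says that $L\cap K_1$ equals $L\cap K_2$ or its reflection through $\Sigma$, and since $p_1$ and $p_2$ are interior points of these chords this places no constraint whatsoever on $\lambda$.

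The paper closes the argument quite differently, and none of that mechanism is present or replaced in your proposal. It first proves, for every pair of parallel lines $L_1\subset H_1$, $L_2\subset H_2$ through $p_1,p_2$, the equality $\len(L_1\cap N_1)=\len(L_2\cap N_2)$, by an argument with the families $C_1,C_2$ of chords of prescribed direction and length and the bounding planes $U,V$, using the reflections $S_N$ for suitably chosen lines $N\subset H$ (not only the two special lines $M_\Delta$, $M_\perp$ you consider). Combined with $N_1=N_2+u$ and $p_1\neq p_2+u$, this yields genuinely inscribed rectangles in the planar section $N_1$, so Lemma \ref{santo} forces $N_1$ and $N_2$ to be circles; a further argument with the reflected sections $N_1'$, $N_2'$ then places the center of $N_2$ on the $z$--axis and contradicts $N_1=N_2+u$. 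The step that actually forces $\|p_1-\Sigma\|=\|p_2-\Sigma\|$ is therefore missing from your proposal.
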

\begin{proof}
On the contrary to Lemma's claim, we suppose that $\|p_1-\Sigma\| \not= \|p_{2}-\Sigma\|$, say $\|p_1-\Sigma\| > \|p_{2}-\Sigma\|$. Let $L_1 \subset H_1$, $L_2 \subset H_2$ be parallel lines passing through $p_1$ and $p_2$, respectively. First, we assume that 
\begin{eqnarray}\label{mango}
\len(L_2 \cap N_2)  > \len(L_1 \cap  N_1).
\end{eqnarray}
We denote by $C_1$ and $C_2$ the collections of all chords of $K_1$ and $K_2$, respectively, parallel to $L_2$ and with length equal to $L_2 \cap N_2$. Let $U$ and $V$ be orthogonal planes to $L_2 \cap N_2$ and passing through the extreme points of this line segment. The sets $C_1$ and $C_2$ are contained in the strip determined by $U$ and $V$. Taking lines $W\subset H$, parallel to $L_1$, we can see that the chords of $C_2$ are mapped into the chords of $C_1$ by the reflection which map the plane $\pi_2(W)$ into the plane $\pi_1(W)$. In the same way, the points of $ U \cap K_2$ are mapped into the points of $ U \cap K_1$ and the points of $V \cap K_2$ are mapped into the points of $V \cap K_1$ (see Fig. \ref{demon}). By virtue of (\ref{mango}) it follows that $L_1 \cap  N_1$ does not belong to $C_1$, i.e., the point $z_1:=L_1\cap U $ does not belong to the convex figure $U \cap K_1$, however, $z_2:=L_2\cap U$ belongs to $\bd (U \cap K_2)$. Let $M\subset U $ be a line which separate $U \cap K_1$ from $z_1$. We denote by $M'$ the line parallel to $M$ passing through $z_1$. Let $\Pi$ be the plane $\aff \{L_1,M'\}$ and let $N:=\Pi \cap H$. We have $\Pi \cap C_1=\emptyset$ and $\pi_2(N)\cap C_2 \not= \emptyset.$ We consider the reflection $S_N:\pi_2(N) \rightarrow \Pi:=\pi_1(N)$ with respect to a plane passing through $N$ which maps $\pi_2(N)\cap K_2$ into $\pi_1(N)\cap K_1$ given by the hypothesis, we have that $S_N(L_2\cap  N_2)$ is a chord of $C_1$ but this is in contradiction with the fact $\Pi \cap C_1=\emptyset$.
\begin{figure}
    \centering
    \includegraphics[width=.78\textwidth]{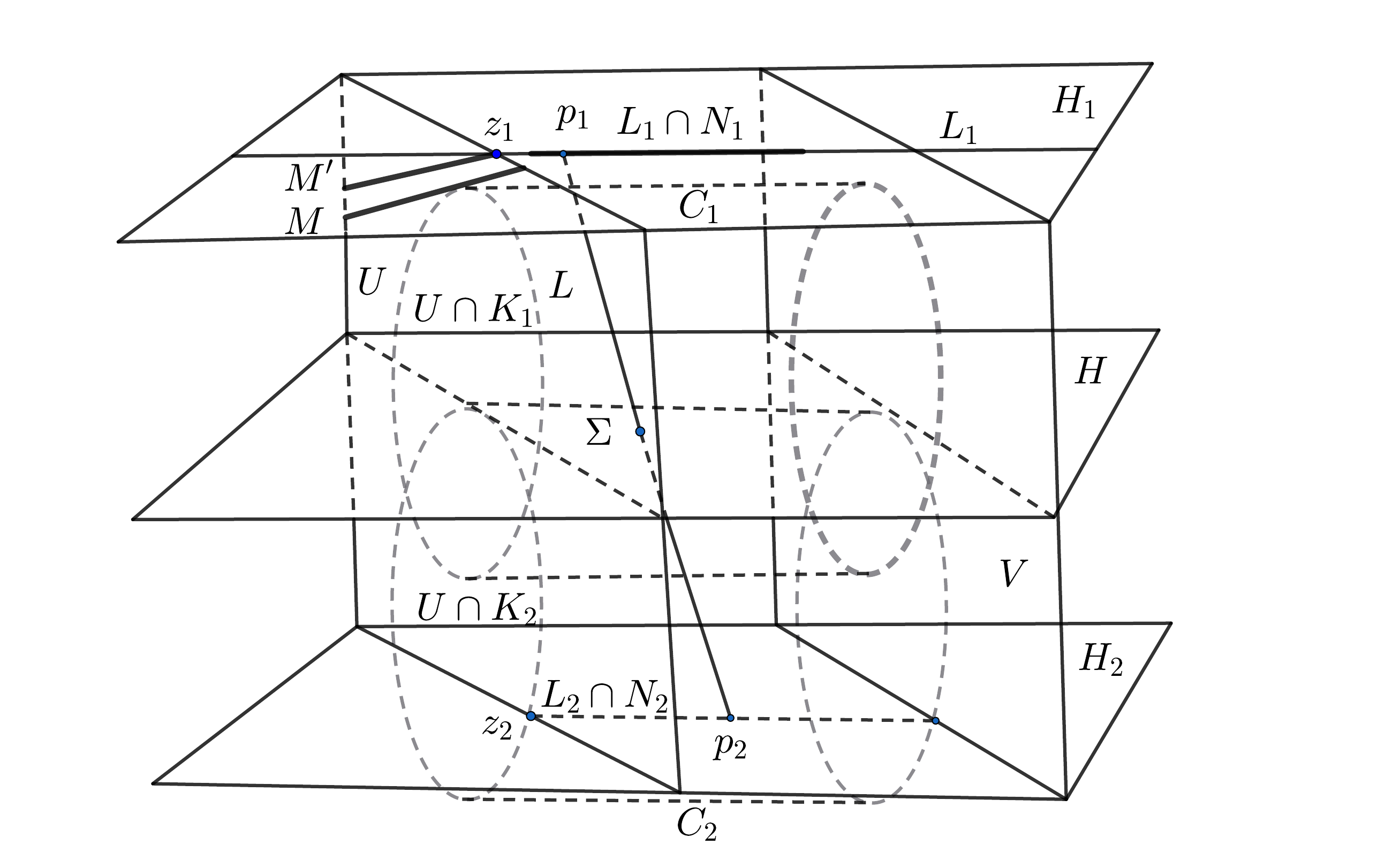}
    \caption{For all pairs of parallel lines $L_i \subset H_i$, $p_i\in L_i$, $i=1,2$, we have $\len(L_1 \cap  N_1)=\len(L_2 \cap  N_2)$.}
 \label{demon}
\end{figure}
Then we can assume that for all pairs of parallel lines $L_i \subset H_i$, $p_i\in L_i$, $i=1,2$, we have  
\begin{eqnarray}\label{campesina}
\len(L_1 \cap  N_1)=\len(L_2 \cap  N_2)
\end{eqnarray}
By assumption $L$ is not orthogonal to $H$, then $p_1\not= p_2+ u$ and, moreover, since $N_1=N_2+u$ we have
\[
\len(L_2\cap N_2) =\len[(L_2\cap N_2)+u] = \len[(L_2+u)\cap N_1].
\]
By (\ref{campesina}),
\[
\len(L_1\cap N_1) =\len[(L_2+u)\cap N_1],
\]
i.e., the pairs of parallel chords of $N_1$, $ L_1\cap N_1 $ and $(L_2+u) \cap N_1$ passing through $p_1$ and $(p_2+u)$, respectively, have the same length and the extreme points of this segments are vertices of an scribed rectangle in $N_1$, notice that the segment $L_1 \cap N_1$, $L_2 \cap N_2$ and $(L_2+u)\cap N_1$ defined an orthogonal prism (see Fig \ref{panter}). Thus the convex figure $N_1$ and the different points $p_1$ and $p_2+u$ satisfies the condition of Lemma \ref{santo} and, consequently, $N_1$ is a circle. Since $N_2=(-u)+N_1$, $N_2$ is a circle also.
\begin{figure}
    \centering
    \includegraphics[width=.78\textwidth]{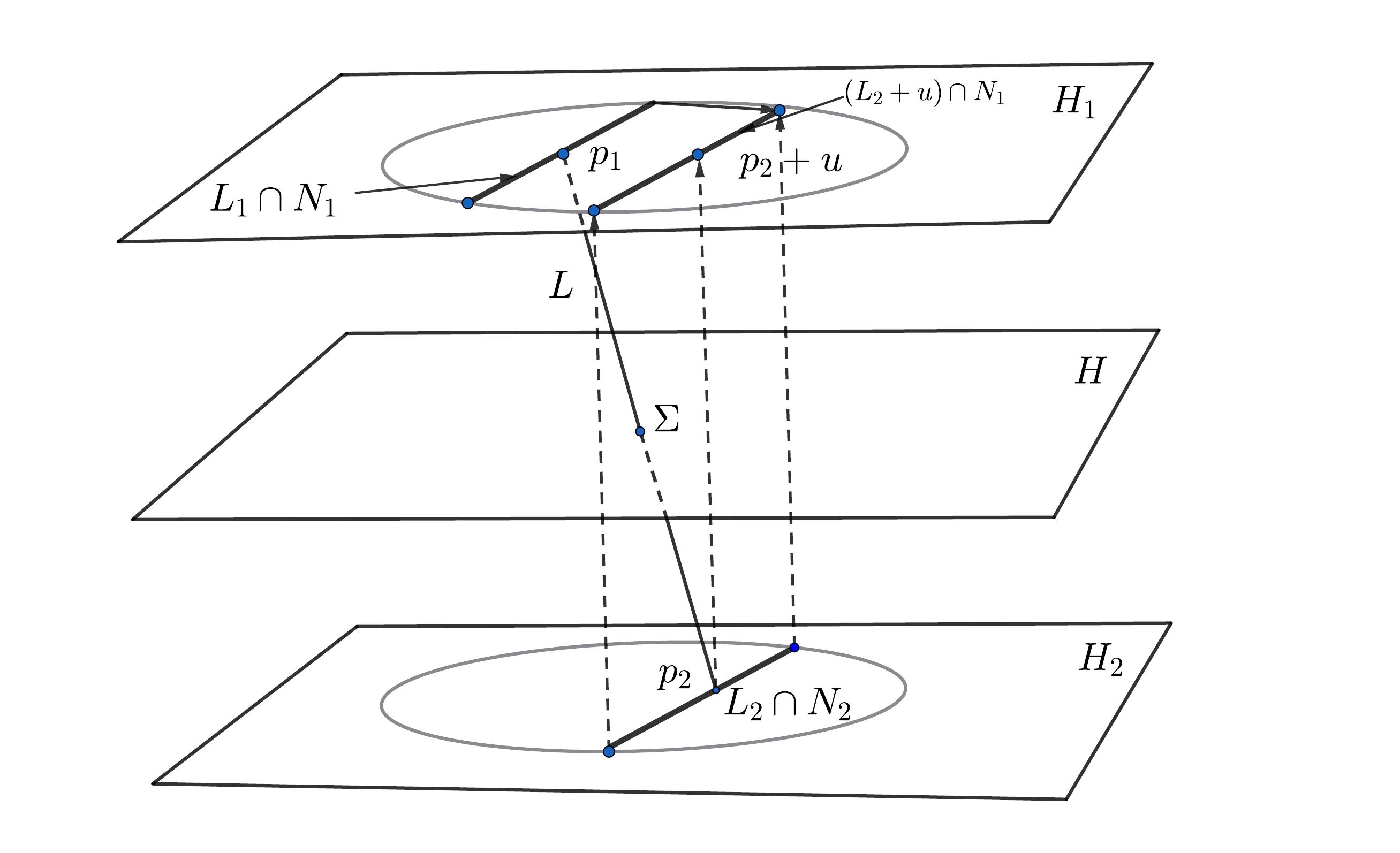}
    \caption{The sections $N_1$, $N_2$ are circles.}
 \label{panter}
\end{figure}
We are going to prove that the sections $N'_1=H'_1 \cap K_2$ and $N'_2=H'_2 \cap K_1$  are circles, where $H'_i=S(H_i)$, $i=1,2$. In order to prove this, we will use the Lemma \ref{santo}. We denote by $c$ and $r$ the center and the ratio of $N_1$, respectively, and let $W$ be a line orthogonal to $H$ and passing through $c$. We take a coordinate system such that the point $H \cap W$ is the origin, $H$ is the plane $z=0$ and the plane $\Delta$ has equation $y=0$. On the other hand, let $ p'_i \in L$ 
such that $\| p'_i-\Sigma\|=\| p_i-\Sigma\|$, $i=1,2$, and for all line $L_i\subset H_i, p_i \in L_i$, we denote by $ L'_i\subset H'_i$ the line parallel to $L_i$  and passing through $ p'_i$, $i=1,2$. 

We observe that for every plane $\Pi$, $L \subset \Pi$, the section $\Pi \cap K_1$ is a reflection of the section $\Pi \cap K_2$ with respect to $\Pi \cap H$. Notice that for every line $M\subset H$, $M$  is invariant under the reflection $S_M:\Rt \rightarrow \Rt$ given by the hypothesis of Theorem~1, in particular, when $M$ is passing through $\Sigma$, we have that $M=\Pi\cap H$, 
$\pi_1(M)=\Pi=\pi_2(M)$, $S_M$ is a reflection with respect to a plane perpendicular to $\Pi$, containing $M$ and $S(M)=M$. Consequently, the restriction of $S_M$ to $\Pi$ is the reflection with respect to the line $M$. Hence for all line $L_1\subset H_1, p_1 \in L_1,$ the chords $L_1 \cap N_1$ and $ L'_1 \cap N'_1 \in C_2$ have the same length. As we already have seen, the chord $[e,f]$ of $ N'_1$ parallel and with the same length than $ L'_1 \cap N'_1$ belongs to $C_2$. Since $U$ and $V$ are orthogonal to the chords of $C_2$, the quadrilateral inscribed in $ N'_1$ limited by the chords $ L'_1 \cap  N'_1$, $[e,f]$, $U \cap  N'_1$ and $V \cap  N'_1$ is a rectangle (see Fig. \ref{wagner}).  Thus $N'_1$ satisfies conditions of Lemma \ref{santo} with respect to point $ p'_1$. Hence $N'_1$ is a circle. 
\begin{figure}
    \centering
    \includegraphics[width=.78\textwidth]{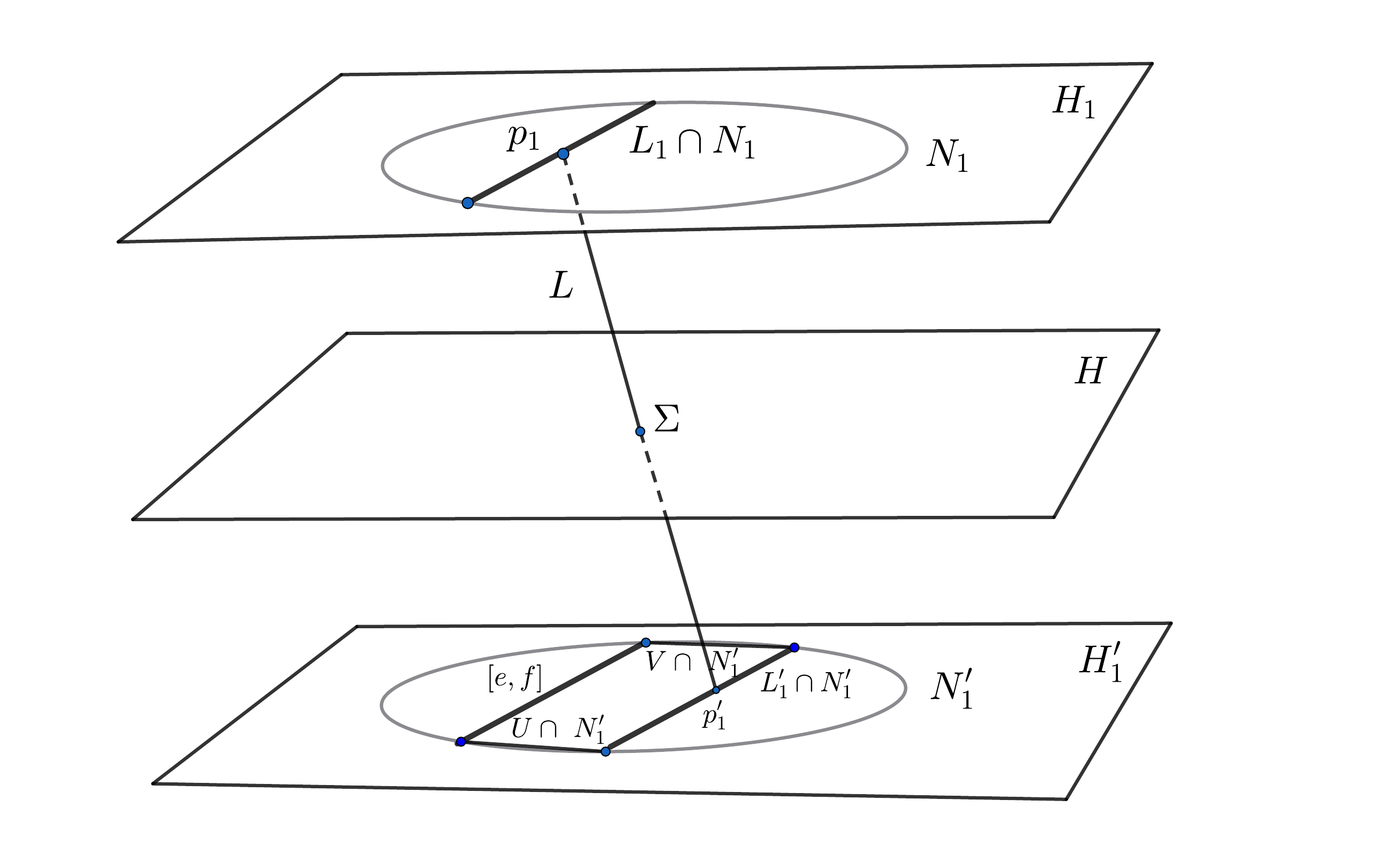}
    \caption{The sections $N'_1$, $N'_2$ are circles.}
 \label{wagner}
\end{figure}
According with the notation given above and in the Lemma \ref{santo}, since $N_1$ is a circle, for each line $L_1$, $p_1\in L_1$, parallel to a unit vector $u\in \Suno\subset H$, the planes $U$ and $V$ are at a distance $d_{rt}(u)$ from $c$, where $t$ is the $x$ coordinate of $p_1$. Thus all the chords of $N_1$ passing through $p_1$ have the same length than the corresponding parallel chords  of $N'_1$ passing through $p'_1$ (such length is given by the function $2d_{rt}(u)$), hence $N_1$ and $N'_1$ are circles of the same radius. 

By analogous arguments, $N'_2$ is a circle of the same radius than $N_2$. Thus the four circles $N_1$, $N_2$, 
$N'_1$ and $N'_2$ have the same ratio.

Now we take a system of coordinates such that $\Sigma$ is the origin and $ H$ is the plane $z=0$ and $\Delta$ is plane $y=0$. Since we are assuming that $L$ is not orthogonal to $H$, we can assume that $p_1$ has positive $x$ coordinate and $p_2$ has negative $x$ coordinate. Let $q\in H'_2 \cap  \Delta $ be a point with the same coordinate $x$ as $p_2$, $d$ be the point determined by the intersection of the lines $L(p_1, q)$ and the axis $x$ (See Fig. \ref{guitarra}). Let $Q,D,L_2$ be three lines orthogonal to $\Delta$ passing through $q$, $d$ and $p_2$, respectively. We denote by $\pi_{1}(D)$ and $\pi_{2}(D)$ the planes $\aff \{D,p_1\}$ and $\aff \{D,p_2\}$, respectively and we consider the reflection $S_D:  \Rt \rightarrow \Rt$ which 
satisfies $S_D(\pi_{2}(D) \cap K_2)= \pi_{1}(D) \cap K_1$. By virtue that
the plane $H$ bisect the angle between the planes $\pi_{1}(D)$ and $\pi_{2}(D)$ and since the triangle $d p_2 q$ is isosceles, it follows that $S_D(L_2)=Q$. Thus 
\[
\len(L_2 \cap N_2) =\len(Q \cap N'_2).
\]
On the other hand, we know that 
\[
\len( L_2 \cap N_2)=\len(L'_2 \cap N'_2),
\]
then
\[
\len(Q\cap N'_2) =\len( L'_2\cap N'_2).
\]
Hence the center of $N'_2$ is in the $z$ axis. Thus $N_2$ has its center in the $z$ axis. However, since $\|p_2-\Sigma\|< \|p_1 -\Sigma\|$, the absolute value of the $x$ coordinate of $p_1$ is $\geq$ than the absolute value $x$ coordinate of $p'_2$ and the mid point of the segment $[p_1, p_2 +u]$ can not be in the axis $z$, but such mid point is the center $c$ of $N_1$. This is in contradiction with  the fact $N_1= u+ N_2$, $u$ parallel to $z$. From here the equality (\ref{trinidad}) holds and the Lemma follows.
\end{proof}
\begin{figure}
    \centering
    \includegraphics[width=.78\textwidth]{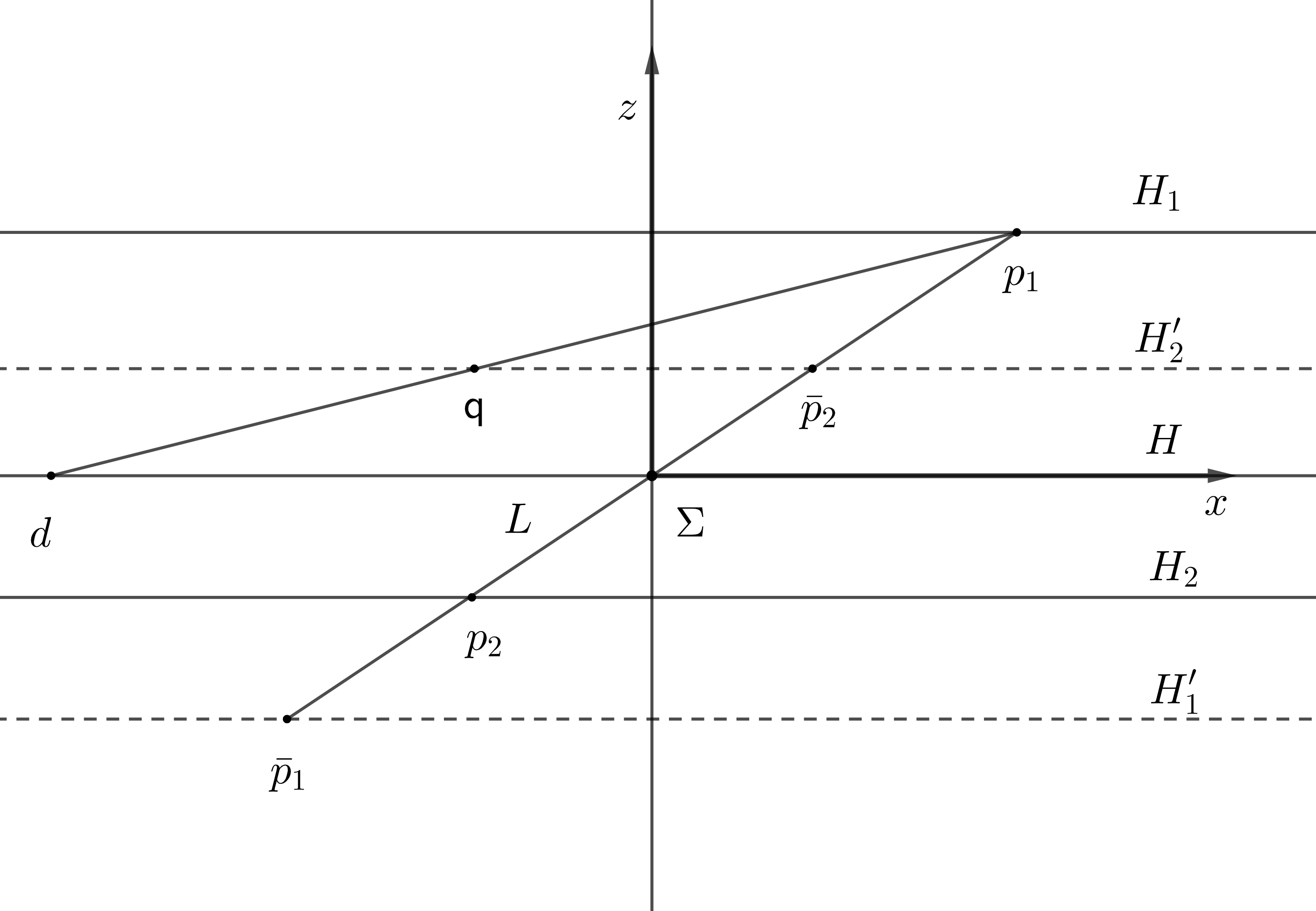}
    \caption{The center of $N'_2$ is in the $z$ axis.}
    \label{guitarra}
\end{figure}
\begin{lemma}\label{isabella} 
$K_1$ is the image of $K_2$ under the reflection with respect to plane $H$, i.e., 
\begin{eqnarray}\label{magia}
S(K_2)=K_1.
\end{eqnarray}
\end{lemma}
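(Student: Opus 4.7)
The plan is to leverage the equality $\|p_1-\Sigma\|=\|p_2-\Sigma\|$ from Lemma \ref{dey} (so that, in coordinates with $\Sigma$ at the origin, $p_2=-p_1$) and to show that $K_1$ and $K_2$ are bodies of revolution about the $z$-axis (the line through $\Sigma$ perpendicular to $H$), from which $S(K_2)=K_1$ will follow. First, I would revisit the sub-argument in the proof of Lemma \ref{dey} that rules out differing chord lengths $\len(L_1\cap N_1)\neq\len(L_2\cap N_2)$ for pairs of parallel lines $L_1\subset H_1$, $L_2\subset H_2$ through $p_1, p_2$. That sub-argument uses only the reflection hypothesis of Theorem \ref{chido} and is indifferent to which of $\|p_1-\Sigma\|, \|p_2-\Sigma\|$ is larger, so it applies verbatim under the present hypothesis, forcing all such parallel chord lengths to agree. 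Rogers's central-symmetry characterization then makes $N_1$ centrally symmetric about the midpoint of $p_1$ and $p_2+u$; since $p_2=-p_1$, this midpoint lies on the $z$-axis, and Lemma \ref{santo} upgrades $N_1$ to a circle centered on the $z$-axis at height $c$. By translation $N_2$ is a circle of the same radius centered on the $z$-axis at height $-c$.

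The main step is to propagate the circular cross-section conclusion from $\{z=\pm c\}$ to every horizontal slab $\{z=t\}$. At a generic height $t$, the two points $L\cap\{z=t\}$ and $S(L)\cap\{z=t\}$ are symmetric about the $z$-axis and play the role of $p_1, p_2+u$ in the previous step; applying the reflection hypothesis to suitable lines $M\subset H$ reproduces the inscribed-rectangle configuration required by Lemma \ref{santo} and yields that $K_1\cap\{z=t\}$ is again a circle centered on the $z$-axis, and similarly for $K_2$. Once both $K_1$ and $K_2$ are bodies of revolution about the common $z$-axis, the matching of radii at the symmetric heights $z=\pm c$ combined with the reflection hypothesis at intermediate heights forces the radius functions to satisfy $r_1(t)=r_2(-t)$ at every $t$, giving $S(K_2)=K_1$ (alternatively, one may close via Lemma \ref{aunnoprobado} by checking that every orthogonal projection of $K_i$ onto a plane containing the $z$-axis is symmetric about that axis).

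The main obstacle is this propagation step: the reflection hypothesis of Theorem \ref{chido} is framed with the fixed base points $p_1, p_2$, so deriving a circular cross-section at a new height $t$ requires transferring the Rogers--Lemma \ref{santo} chain of reasoning to a \emph{moving} pair of reference points along $L$ and $S(L)$, and using the strict convexity hypothesis to ensure the inscribed rectangles remain non-degenerate and the argument stays uniformly valid across all heights. A secondary subtlety is justifying, at each height, that the candidate reference points actually lie in the interior of the corresponding section so that Rogers's characterization legitimately applies; this should follow from the fact that the parallel chord lengths obtained from the Case A sub-argument are positive, forcing interiority as in the proof of Lemma \ref{dey}.
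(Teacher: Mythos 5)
Your first paragraph is sound: the chord-equality sub-argument and the Rogers--Lemma \ref{santo} step from the proof of Lemma \ref{dey} do carry over when $\|p_1-\Sigma\|=\|p_2-\Sigma\|$ (the only inputs are $p_1\neq p_2+u$, which follows from $L$ not being orthogonal to $H$, the relation $N_1=N_2+u$, and $p_i\in\inte K_i$), and they show $N_1$ is a circle centred on the $z$-axis. But the argument then stalls exactly at the step you yourself flag as ``the main obstacle'', and that obstacle is not a technicality: it is essentially the lemma being proved. To run the Rogers/inscribed-rectangle argument in the horizontal section $K_1\cap\{z=t\}$ you need \emph{two} families of parallel chords of that single section, through $L\cap\{z=t\}$ and through $S(L)\cap\{z=t\}$. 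The hypothesis of Theorem \ref{chido} only produces, for a chord of $K_1$ at height $t$ through a point of $L$, an equally long chord of $K_2$ lying at some height that depends on the unknown mirror of $S_M$; converting that chord of $K_2$ into a chord of $K_1\cap\{z=t\}$ through $S(L)\cap\{z=t\}$ is precisely the statement $S(K_2)=K_1$. This is reflected in the paper's own architecture: the circles-at-every-height statement is Lemma \ref{revolution}, which comes \emph{after} Lemma \ref{isabella} and explicitly invokes (\ref{magia}) to fold the $K_2$-chord back into the section of $K_1$. The only heights where the transfer is available without (\ref{magia}) are $z=\pm z_1$, because there $H_1$ and $H_2$ are themselves (limits of) section planes covered by the hypothesis, yielding $N_1=N_2+u$; no analogous relation exists a priori at any other height. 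Hence your propagation step, and with it the closing radius-matching claim $r_1(t)=r_2(-t)$, is a genuine gap, and the order of deduction needs to be reversed.

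For comparison, the paper's proof is short and does not pass through bodies of revolution at all: since $\|p_1-\Sigma\|=\|p_2-\Sigma\|$, the line $M_1=L(p_1,S(p_2))$ is parallel to $H$ and, for every line $M\subset H$ parallel to $M_1$, the planes $\pi_1(M)$ and $\pi_2(M)$ are mirror images in $H$, which forces $S_M=S$; as every point $x\in\bd K_2$ lies on a plane $\aff\{x,M_2\}$ of this one-parameter family, $S(x)\in K_1$ and the lemma follows. If you want to salvage your outline, you would have to prove the $S_M=S$ statement for this distinguished family of lines first --- at which point you have the paper's proof and the revolution structure becomes unnecessary for this lemma.
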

\begin{proof}
Let $M_1 :=L(p_1,S(p_2))$ and we denote by $M_2$ the line parallel to $M_1$ passing through $ p_ 2$. We observe that, by Lemma \ref{dey}, $M_1$ is parallel to $H$. On the other hand, notice that, for a line $M\subset H \backslash \Delta$ parallel  to $M_1$, we have $S_M=S$. Since (\ref{trinidad}) of Lemma \ref{dey} holds, $M_1$ and $M_2$ are at the same distance from $H$. This implies the angle between $\pi_1 (M)$ and $\pi_2 (M)$  is bisected by $H$. Thus 
\[
S=S_M \textrm{ }\textrm{ for all line } M\subset H \backslash \Delta \textrm{ }\textrm{ parallel  to }M_1.
\]
Let $x \in \bd K_2$, $x\notin M_2$. Let $\pi_2:=\aff \{x, M_2\}$,  
$M:=\pi_2 \cap H$ and $\pi_1:=\aff\{M,M_1\}$. Thus 
$S(\pi_2 \cap K_2)=S_M(\pi_2 \cap K_2)=\pi_1 \cap K_1$. Hence $S(x)\in K_1$, i.e., (\ref{magia}) holds.
\end{proof}
We take a system of coordinates such that $\Sigma$ is the origin and $H$ is given by the equation $z=0$. Let $\Pi$ be a plane parallel to $H$ and let $\Pi':=S(\Pi)$, $L':=S(L)$, $a:= \Pi \cap L$, $b:= \Pi \cap L'$, $b'=S(b)$ and $\Psi:= \Pi \cap K_1$. 
 
\begin{lemma}\label{revolution}
If the points $a,b $ are interior points of the section $\Psi$, then  
$\Psi$ is a circle with center at axis $z$. 
\end{lemma}
\begin{proof}
Let $\Pi$ be plane parallel to $H$ such that  points $a$ and $b$ are interior points of the section $\Psi $. In order to prove Lemma \ref{revolution}, we are going to show that the section $\Psi$ and the points $a$ and $b$, satisfies the conditions of Lemma \ref{santo}.

First, we observe that, for each pair of parallel lines $L_1,L_2\subset  \Pi$ passing through $a$ and $b'$, respectively, it follows that 
\begin{eqnarray}\label{richard}
\len(L_1\cap K_1)=\len(L_2\cap K_2).
\end{eqnarray} 
Let $M$ be the line parallel to $L_1$ passing through $\Sigma$. We have $\pi_1 (M)= \pi_2 (M)$ and $S_M$ is the reflection in $\pi_1 (M)=\pi_2(M)$ with respect to the line $M$. By virtue of (\ref{trinidad}), $S_M(L_1)=L_2$. From here, (\ref{richard}) follows. On the other hand, we observe that, by (\ref{magia}) of Lemma \ref{isabella}, $S(L_2\cap K_2)$ is a chord of $\Pi \cap K_1$ passing through $b$, parallel to $L_1\cap K_1$. Since $\len[ S(L_2\cap K_2)]=\len[L_2\cap K_2]$, by (\ref{richard}) we have 
\[
\len(L_1\cap K_1)=\len[ S(L_2\cap K_2)] 
\]
(notice that the segment $L_1 \cap K_1$, $L_2 \cap K_2$ and $S(L_2\cap K_2)$ defined an orthogonal prism).
\end{proof}

Let $K\subset \Rn$ be a convex body and let $L$ be a line through the origin. The union of all tangent lines of $K$ parallel to $L$ will be called 
the \textit{support cylinder} of $K$ corresponding to the line $L$ and which will be denoted by $T\partial (K,L)$. The \textit{shadow boundary} of $K$ in the direction of $L$ is defined as $S\partial (K,L)=T\partial (K,L)\cap K.$ If $v\neq 0,$ then $S\partial(K,v)=S\partial (K,L),$ where $L$ is the line through the interval $[0,v]$.

\begin{remark}\label{vivaldi} 
The bodies $K_1$ and $S(K_2)$ satisfies conditions of Theorem~1 with respect to the points $p_1$, $S(p_2)$ and the plane $H$.
\end{remark}
By Lemma \ref{isabella}, $K_1=S(K_2)$. Thus, by Remark \ref{vivaldi}, $K_1$ satisfies the following property:
\begin{itemize} 
\item[(*)] For each line $M\subset  H$,
there exists a reflection $S_{M}:\Rt \rightarrow \Rt$ with respect to some plane, containing $M$, such that
\[
S_{M}(\pi_{2}(M)\cap  K_1)= \pi_{1}(M)\cap K_{1},
\]
where $\pi_{1}(M):=\aff\{p_1,M\}$ and $\pi_{2}(M):=\aff\{S(p_2),M\}$
\end{itemize}
(Notice that, since we are assuming that $L$ is not orthogonal to $H$, $p_1\not=S(p_2)$).

We recall that we denote by $M_1$ the line $\aff \{p_1,S (p_2)\}$ and we observe that, by Lemma \ref{dey}, $M_1$ is parallel to $H$. We will change the notation in such a way that now, for all line $M\subset H$, $\pi_2(M)$ will denote the plane 
$\aff\{S(p_2),M\}$.

\begin{lemma}\label{margarita}
For each plane $\Gamma$ con\-tai\-ning $ M_1$, the relation
\[
\Gamma \cap K_1 \subset S\partial (K_1,\Gamma^\perp)
\]
holds, where $\Gamma^\perp$ is the line orthogonal to $\Gamma$. Furthermore $K_1$ is a body of revolution with axis the line $M_1$.
\end{lemma}

\begin{proof}
 Let $\Gamma$ be a plane, with unit normal vector $u$, containing the line $M_1$. We denote by $m$ the intersection $\Gamma \cap H$. Let $\{m_n\}\subset H$ be a sequence of lines such that $m_n \rightarrow m$, when $n \rightarrow \infty$, and $m_n$ is not parallel to $m$. For $n=1,2,..$, the sections $\pi_1(m_n)\cap K_1$ and $\pi_2(m_n)\cap K_1$ are the sections of a cylinder $C_n$ whose generatrix are parallel to unit vector $u_n\in \mathbb{S}^{2}$ which is orthogonal to the plane of the reflection $S_{m_n}$, where $S_{m_n}:\Rt \rightarrow \Rt$ is such that $S_{m_n}(\pi_2(m_n)\cap K_1)=\pi_1(m_n)\cap K_1$. Notice that, by the strictly convexity of $K_1$, 
\begin{eqnarray}\label{luismi} 
S\partial (K_1,u_n)\subset \bd K_1\backslash C_n, 
\end{eqnarray} 
for all $n$. Since  $\pi_1(m_n) \rightarrow \Gamma$ and $\pi_2(m_n) \rightarrow \Gamma$ when $n \rightarrow \infty$, then the plane of the reflection of $S_{m_n}$ tends to $\Gamma$ when $n \rightarrow \infty$. Thus $u_n \rightarrow u$ when $n \rightarrow \infty$. 

On the one hand, by (\ref{luismi}), $S\partial (K_1,u_n)\rightarrow \Gamma \cap K_1$ when $n \rightarrow \infty$ and, on the other hand, $S\partial (K_1,u_n) \rightarrow S\partial (K_1,u)$ when $n \rightarrow \infty$. Consequently $S\partial (K_1,u)=\Gamma \cap K_1$.

In order to prove that $K_1$ is a body of revolution with axis the line $M_1$ we are going to prove that all the sections of $K_1$, perpendicular to $M_1$, are circles with center at $M_1$. Let $\Pi$ be a plane perpendicular to $M_1$ and such that $\Pi\cap \operatorname{int}(K_1)\not=\emptyset$. In order to prove that $\Pi\cap K_1$ is a circle we are going to show that, for every point $y\in \bd (\Pi\cap K_1)$, there is a supporting line $L$ of $\Pi\cap K_1$ at $y$ perpendicular to the line $L(x,y)$, where $x:=\Pi \cap M_1$ (see Fig. \ref{sb}). It is well know that such property characterizes the circle. 
\begin{figure}
    \centering
    \includegraphics[width=.88\textwidth]{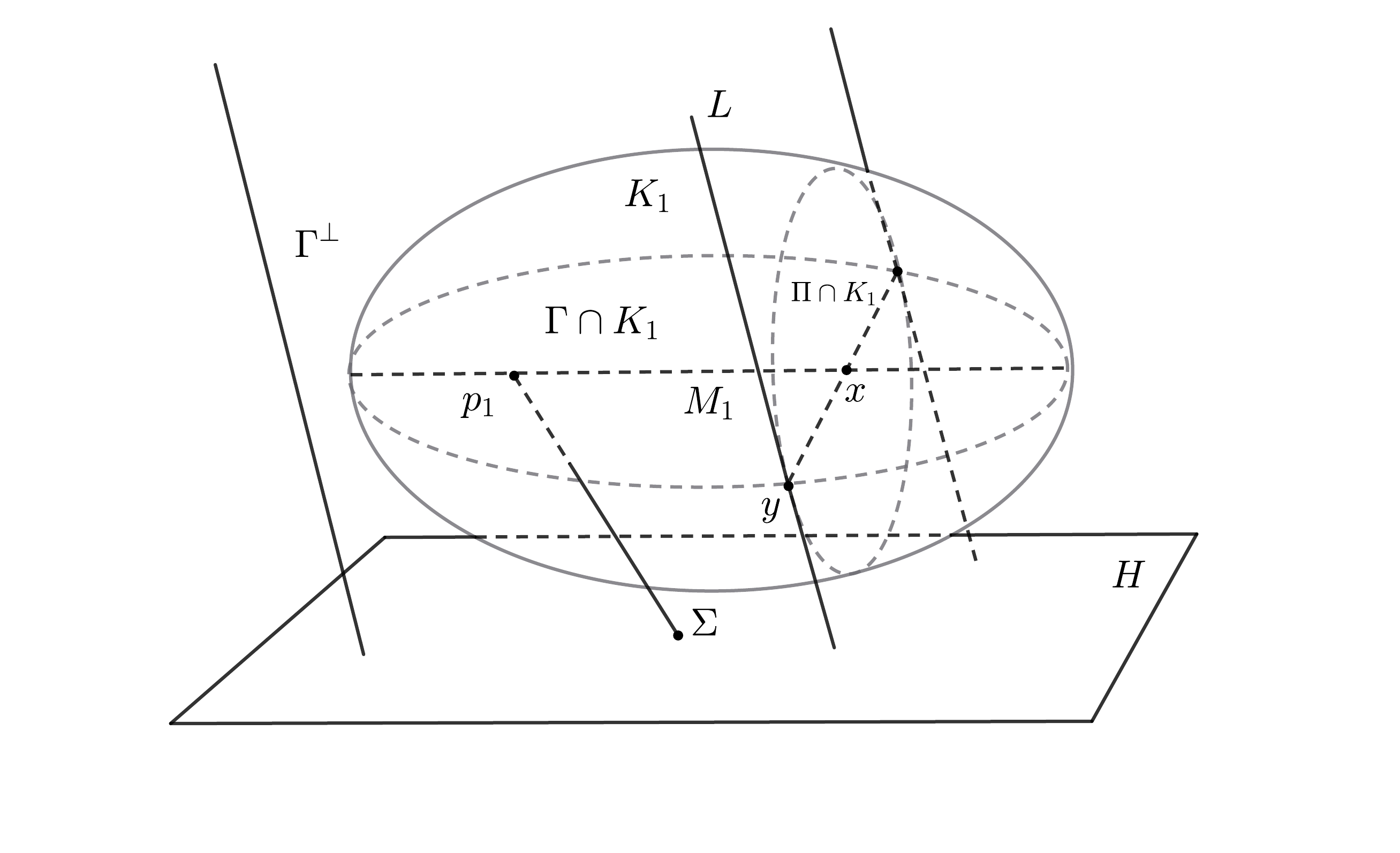}
    \caption{The body $K_1$ is a body of revolution with axis $M_1$.}
    \label{sb}
\end{figure}
Let $y\in \bd (\Pi\cap K_1)$. By the first part of the proof of Lemma \ref{margarita}, $S\partial (K_1,\Gamma^\perp)= \Gamma \cap K_1$, where $ \Gamma:=\aff\{y,M_1 \}$ and $\Gamma^\perp$ is a line  perpendicular to $\Gamma$. Thus there exists a supporting line $L$ of $K_1$ at $y$ parallel to $\Gamma^\perp$. Since $L(x,y)\subset  \Gamma$, $L(x,y)$ and $L$ are perpendicular.
\end{proof}
\begin{lemma}\label{milamores}
The body $K_1$ is a sphere.
\end{lemma}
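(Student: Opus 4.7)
The plan is to combine the two structural results already obtained in the section. By Lemma \ref{margarita}, $K_1$ is a body of revolution with axis the line $M_1$; and by Lemma \ref{revolution}, every horizontal section of $K_1$ in which both $L$ and $L'=S(L)$ meet the interior is a disc centered on the $z$-axis. In the coordinate system introduced at the start of the section, Lemma \ref{dey} forces $p_2=-p_1$, so $M_1=L(p_1,S(p_2))$ is the horizontal line $\{(t,0,z_1):t\in\R\}$. Intuitively, a convex body that is revolution-symmetric about a horizontal axis while simultaneously having circular horizontal cross-sections centered on a vertical line must be a sphere, and I will make this precise by writing out both descriptions of the horizontal slices and matching them.

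First I would write the revolution structure explicitly as
\[
K_1 = \{(x,y,z)\in\Rt : y^2 + (z-z_1)^2 \leq g(x)^2\}
\]
for some nonnegative function $g$. Since $p_1\in\inte K_1$, Lemma \ref{revolution} applies to every horizontal section $\Psi_{z_0}:=K_1\cap\{z=z_0\}$ for $z_0$ in a small neighborhood of $z_1$, yielding $\Psi_{z_0}=\{(x,y,z_0):x^2+y^2\leq r(z_0)^2\}$ for some radius $r(z_0)>0$. Matching this against the description $\Psi_{z_0}=\{(x,y,z_0):y^2\leq g(x)^2-(z_0-z_1)^2\}$ coming from the revolution formula produces the identity
\[
g(x)^2 + x^2 = r(z_0)^2 + (z_0-z_1)^2.
\]
The left side depends only on $x$ and the right only on $z_0$, so both must equal a common constant $R^2$. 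This forces $g(x)=\sqrt{R^2-x^2}$ and identifies $K_1$ as the closed ball of radius $R$ centered at $(0,0,z_1)$.

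The only step requiring care is that the identity above, initially established for $z_0$ in a neighborhood of $z_1$ and for $x$ in the corresponding slice, must actually determine $g$ on its entire support. This is immediate: every $x_0$ with $g(x_0)>0$ appears in the slice $\Psi_{z_1}$ itself, and $z_1$ lies in the neighborhood, so specializing to $z_0=z_1$ gives $g(x_0)^2+x_0^2=R^2$ wherever $g$ is positive, while outside that set the revolution body is empty at the given $x$-coordinate. Beyond this minor verification, no further geometry is needed: the conclusion that $K_1$ is a sphere follows from pure equation matching between the two expressions for the horizontal slices.
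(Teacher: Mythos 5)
Your argument is correct and follows essentially the same route as the paper: both combine Lemma \ref{margarita} (revolution about $M_1$) with Lemma \ref{revolution} applied to the horizontal plane through $p_1$ and $S(p_2)$, the paper concluding synthetically that a body of revolution generated by a disc centered on its axis is a ball, while you carry out the same matching in coordinates. The only point to state explicitly is that $S(p_2)\in\inte K_1$ (via Lemma \ref{isabella}), so that both points required by Lemma \ref{revolution} are interior to the section.
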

\begin{proof}
Since $p_i \in \operatorname{int}(K_i)$, $i=1,2$, and, by Lemma \ref{isabella}, $S_H(K_2)=K_1$ it follows that $S_H(p_2)\in  \operatorname{int}(K_1)$. By Lemma \ref{revolution}, the section $H_1\cap K_1$ is a circle (we recall that $H_1$ is the plane parallel to $H$ passing through $p_1$ and containing $M_1$). On the other hand, by Lemma \ref{margarita}, $K_1$ is a body of revolution with axis the line 
$M_1$. Then $K_1$ is a body of revolution generated by the circle $H_1\cap K_1$. Hence $K_1$ is a sphere.
\end{proof}
Let $B\subset \Rt$ be a sphere with radius $r$ and center $C$
and let $q\in \operatorname{int}(B)$, $q\not=O$. We take  a system of coordinates with the origen $O$ at $C$ and with axis $x$ the line $L(O,q)$. We choose $x_0\in \R$, $0<x_0< r$, such that $q=(x_0,0,0)$. For every $k\in \R$, $0<k$, let $H_k:=\{z=-k\}$, $M_k:= H_k\cap \{x=x_0\}$, $\pi_1(M_k):=\{x=x_0\}$, $\pi_2(M_k):=\aff\{-q,M_k\}$.
\begin{lemma}\label{milpachangas}
Then, for every $k\in \R$, $0<k$, it does not exist a reflection $S:\Rt \rightarrow \Rt$ such that 
$S(\pi_2(M_k)\cap B)=\pi_1(M_k)\cap B$.
\end{lemma}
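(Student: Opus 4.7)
The approach is to reduce the lemma to a metric comparison of two planar disks. Both $\pi_1(M_k)\cap B$ and $\pi_2(M_k)\cap B$ are disks obtained by intersecting the ball $B$ with an affine plane, so each has a well defined radius determined by the distance from the center $O$ of $B$ to the respective plane. Since any reflection is an isometry and therefore preserves the radius of a disk, it suffices to show that these two radii differ.

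The plane $\pi_1(M_k)=\{x=x_0\}$ lies at distance $x_0$ from $O$, so its intersection with $B$ is a disk of radius $\rho_1=\sqrt{r^2-x_0^2}$. For $\pi_2(M_k)=\aff\{-q,M_k\}$, I would first extract a normal vector from two convenient tangent vectors to the plane: the direction $(0,1,0)$ of $M_k$, and the vector $(2x_0,0,-k)$ joining $-q=(-x_0,0,0)$ to the point $(x_0,0,-k)\in M_k$. Their cross product yields the normal $(-k,0,-2x_0)$, and plugging $-q$ into the corresponding plane equation gives $kx+2x_0 z+kx_0=0$. Evaluating this at $O=(0,0,0)$ and normalizing produces the distance $d_2=kx_0/\sqrt{k^2+4x_0^2}$ from $O$ to $\pi_2(M_k)$, so that $\pi_2(M_k)\cap B$ is a disk of radius $\rho_2=\sqrt{r^2-d_2^2}$.

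Since $k>0$ and $x_0>0$, we have $\sqrt{k^2+4x_0^2}>k$, hence $d_2<x_0$; therefore $\rho_2>\rho_1$. As an isometry sends a disk to a disk of the same radius, no reflection can carry the strictly larger disk $\pi_2(M_k)\cap B$ onto the strictly smaller disk $\pi_1(M_k)\cap B$, which is the conclusion of the lemma.

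I do not anticipate any real obstacle here: the entire substance of the argument is the strict inequality $d_2<x_0$, which is geometrically transparent, since $\pi_2(M_k)$ is tilted away from the vertical plane $\pi_1(M_k)$ and consequently passes strictly closer to the center of $B$. The remaining computations are routine provided one is careful about the orientation of the normal and the sign of $x_0$.
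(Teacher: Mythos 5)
Your proof is correct and is essentially the paper's own argument: both reduce the lemma to the fact that a reflection is an isometry and that the plane $\pi_2(M_k)$ passes strictly closer to the center $O$ than $\pi_1(M_k)$, so the two circular sections cannot be congruent (the paper compares areas, you compare radii). In fact your explicit computation of the distance $kx_0/\sqrt{k^2+4x_0^2}<x_0$ supplies the inequality that the paper merely asserts.
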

\begin{figure}
    \centering
    \includegraphics[width=.78\textwidth]{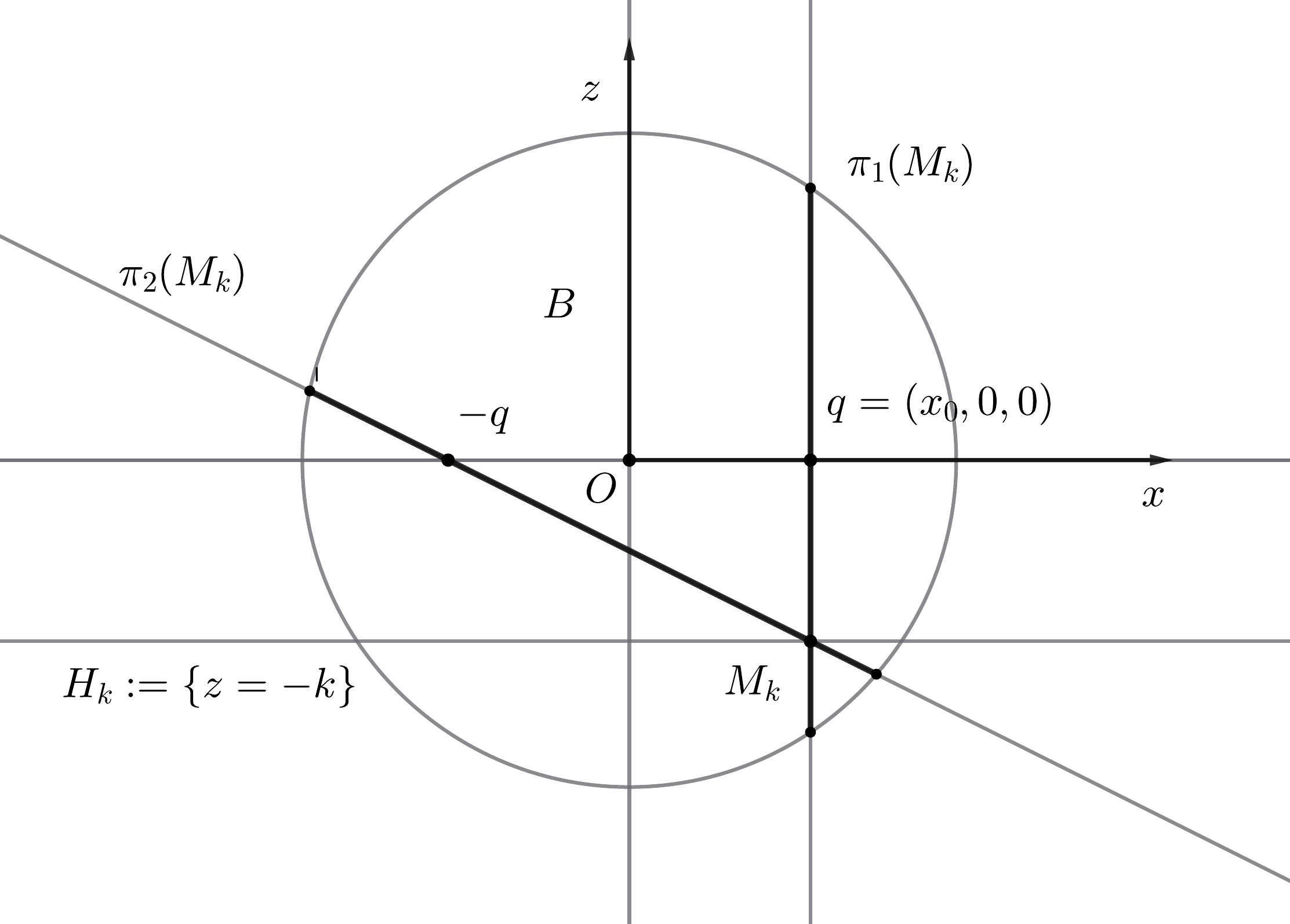}
    \caption{For every $k\in \R$, $0<k$, it does not exist a reflection $S:\Rt \rightarrow \Rt$ such that 
$S(\pi_2(M_k)\cap B)=\pi_1(M_k)\cap B$.}
    \label{sphere}
\end{figure}
\begin{proof} 
For all $k\in \R$, $0<k$, we denote by $A_k$ the area of the section $\pi_2(M_k)\cap B$ and by $A$ the area of $\{x=x_0\}\cap B$.  We observe that 
\begin{eqnarray}\label{tanga}
A <A_k \leq A_0= 2\pi r^2,
\end{eqnarray}
for all $k\in \R$, $0<k$. Thus, for every $k\in \R$, $0<k$, it does not exist a reflection $S:\Rt \rightarrow \Rt$ such that $S(\pi_2(M_k)\cap B)=\pi_1(M_k)\cap B$. Otherwise, if for some $k_0\in \R$ such reflection exist, since $S$ is an isometry, $A_{k_0}=A$ however this would contradict (\ref{tanga}). The proof of Lemma \ref{milpachangas} is complete. 
\end{proof}

\section{Lemmas for the Case $n = 3$, $p_1 \in \operatorname{int}(K_1)$, and $L \perp H$}

In the lemmas of this section we will assume that $S(p_2)\not=p_1$ and $L$ is perpendicular to $H$. 
\begin{lemma}\label{isabella2} 
$K_1$ is the image of $K_2$ under the reflection with respect to plane $H$, i.e., 
\begin{eqnarray}\label{chacha}
S(K_2)=K_1.
\end{eqnarray}
\end{lemma}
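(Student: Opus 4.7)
It suffices to prove that for every plane $\Pi$ through $L$ one has $\Pi\cap K_1=S|_{\Pi}(\Pi\cap K_2)$: since every point of $K_1\cup K_2$ lies in some such plane, the identity $S(K_2)=K_1$ follows immediately. Fix $\Pi$, set $M:=\Pi\cap H$ (a line through $\Sigma$), and note that $\pi_1(M)=\pi_2(M)=\Pi$ because $L\subset\Pi$. The reflection $S_M$ supplied by the hypothesis preserves $\Pi$, and since its mirror must contain $M\subset\Pi$, elementary geometry forces the mirror to be either $\Pi$ itself, in which case $S_M$ is the identity on $\Pi$ and $\Pi\cap K_1=\Pi\cap K_2$ (\emph{case A}), or the plane containing $M$ and orthogonal to $\Pi$, in which case $S_M|_{\Pi}=S|_{\Pi}$ and $\Pi\cap K_1=S(\Pi\cap K_2)$ (\emph{case B}). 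Case B already gives what we want; the work is to exclude a \emph{bad} plane $\Pi_0$ on which only case A holds and $\Pi_0\cap K_2$ is not invariant under $s_{M_0}$, the reflection inside $\Pi_0$ about $M_0:=\Pi_0\cap H$.

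Assume such a $\Pi_0$ exists and take a family of lines $M_c\subset H$ parallel to $M_0$ at signed distance $c$, $c\to 0$, $c\ne 0$. For each $c\ne 0$ the planes $\pi_1(M_c)\ne\pi_2(M_c)$, so strict convexity makes $S_{M_c}$ unique; its mirror is one of the two bisectors of the dihedral angle along $M_c$ formed by $\pi_1(M_c)$ and $\pi_2(M_c)$. A direct computation with $p_1=(0,0,a)$, $p_2=(0,0,-b)$ and $a\ne b$ shows that as $c\to 0$ one bisector tends to $H$ (so $S_{M_c}\to S$) while the other tends to $\Pi_0$ (so $S_{M_c}$ tends to the reflection in $\Pi_0$, which acts as the identity on $\Pi_0$). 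The first type would force $\Pi_0\cap K_1=S|_{\Pi_0}(\Pi_0\cap K_2)$ in the limit, contradicting the assumption on $\Pi_0$. Thus for all small $c\ne 0$ the mirror of $S_{M_c}$ is of the second type.

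To close the argument I would expand $S_{M_c}$ to first order in $c$: the mirror tilts off $\Pi_0$ with leading slope proportional to $(b-a)/(2ab)$, and the resulting linear correction sends a boundary point $q\in\bd(\Pi_0\cap K_2)$ into $\pi_1(M_c)\cap K_1$ with a $\Pi_0$-normal component determined on one side by the outward unit normal of $\bd K_2$ at $q$ and on the other by that of $\bd K_1$ at the matching point of $\Pi_0\cap K_1=\Pi_0\cap K_2$. Because the pencils $\{\pi_1(M_c)\}$ and $\{\pi_2(M_c)\}$ pivot around two different lines, through $p_1$ and through $p_2$ respectively (both parallel to $M_0$), comparing the expansions for $c>0$ and $c<0$ produces a linear identity between these normals at $q$ and at $s_{M_0}(q)$, which forces $\Pi_0\cap K_2=s_{M_0}(\Pi_0\cap K_2)$, contradicting the choice of $\Pi_0$. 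The main obstacle is the clean extraction of this normal-matching identity from the first-order analysis; once it is in hand, case A always coincides with case B, the sectional identity $\Pi\cap K_1=S|_\Pi(\Pi\cap K_2)$ holds for every $\Pi$ through $L$, and $S(K_2)=K_1$ follows.
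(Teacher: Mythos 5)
Your reduction to planes $\Pi$ through $L$, and the dichotomy for the mirror of $S_M$ when $\pi_1(M)=\pi_2(M)=\Pi$ (the mirror must preserve $\Pi$, hence is either $\Pi$ itself or the unique plane through $M=\Pi\cap H$ perpendicular to $\Pi$, which is $H$ because $L\perp H$), are both correct. Your Case B alone is in substance the paper's entire proof: the paper argues, as in Lemma \ref{rica}, that the restriction of $S_M$ to $\Pi$ is the in‑plane reflection about $M$, i.e.\ it silently discards your Case A. So you have isolated a genuine subtlety that the paper's one‑line proof does not address, and your preliminary limiting step is sound: if for a sequence $c_n\to 0$ the mirror of $S_{M_{c_n}}$ were the bisector tending to $H$, then Hausdorff continuity of the sections (legitimate here since $\Pi_0$ meets $\inte K_i$) would put $\Pi_0$ in Case B.

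The problem is that your proposal does not close the gap it opens. The exclusion of a bad plane $\Pi_0$ is only a program: you assert that a first‑order expansion in $c$ of the mirror of $S_{M_c}$ ``would'' produce a normal‑matching identity forcing $\Pi_0\cap K_2$ to be $s_{M_0}$‑symmetric, and you explicitly name the extraction of that identity as the main obstacle. As written, that step does not exist, and it is exactly the step on which the whole lemma now rests. Moreover the sketch faces concrete difficulties: (i) differentiating the positions of extremal points of the sections $\pi_i(M_c)\cap K_1$ in $c$ presupposes well‑defined tangent planes of $\bd K_1$ at the contact points, which strict convexity does not provide; (ii) carrying out the expansion at a single pair of boundary points (e.g.\ the points of $\Pi_0\cap K_1$ extremal in the direction of $L$) yields the desired contradiction with $\|p_1-\Sigma\|\neq\|p_2-\Sigma\|$ only when certain components of the boundary normals do not vanish, so degenerate configurations must be treated separately; and (iii) even granting an identity between the normals at $q$ and at $s_{M_0}(q)$ for each $q$, you still must show this forces equality of the two sections as sets, not merely a pointwise relation between support data. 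Until Case A is actually ruled out (or shown to be harmless), the proof is incomplete — though it is a more honest starting point than the paper's appeal to the argument of Lemma \ref{rica}.
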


\begin{proof}
The argument of the proof is analogous to which was presented in the proof of Lemma \ref{rica}.
\end{proof}

We observe that if we replace the body $K_2$ for $S(K_2)$, then the bodies $K_1$ and $S(K_2)$ satisfies conditions of Theorem~1. By Lemma \ref{isabella2}, it follows that the body $K_1=S(K_2)$ satisfies the following property:
\begin{itemize} 
\item[(**)] For each line $M\subset  H$,
there exists a reflection $S_{M}:\Rt \rightarrow \Rt$ with respect to some plane, containing $M$, such that
\[
S_{M}(\pi_{2}(M)\cap  K_1)= \pi_{1}(M)\cap K_{1},
\]
where $\pi_{1}(M):=\aff\{p_1,M\}$ and $\pi_{2}(M):=\aff\{S(p_2),M\}$. 
\end{itemize}

\begin{lemma}\label{margarita2}
For each plane $\Gamma$, $L\subset \Gamma$, the section $\Gamma \cap K_1$ is equal to the shadow boun\-da\-ry of $S\partial(K_1,u)$, where 
$u$ is a unit vector orthogonal to $\Gamma$. Furthermore $K_1$ is a body of revolution with axis the line $L$.
\end{lemma}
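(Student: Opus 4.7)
The plan is to adapt the proof of Lemma \ref{margarita} essentially verbatim, using property (**) in place of the original hypothesis of Theorem \ref{chido} and the line $L$ in place of the line $M_{1}$. The crucial fact making this adaptation work is that both $p_{1}$ and $S(p_{2})$ lie on $L$, exactly as $p_{1}$ and $S_{\Sigma}(p_{2})$ lay on $M_{1}$ in the previous case.

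First I would establish the shadow-boundary identity $S\partial(K_{1},u)=\Gamma \cap K_{1}$. Fix a plane $\Gamma$ with $L\subset \Gamma$, let $u$ be a unit normal to $\Gamma$, and set $m:=\Gamma \cap H$; since $L\perp H$ and $L\subset \Gamma$, the line $m$ is a line of $H$ passing through $\Sigma$. I choose a sequence $\{m_{n}\}\subset H$ with $m_{n}\to m$ and no $m_{n}$ passing through $\Sigma$. Then $\pi_{1}(m_{n})=\aff\{p_{1},m_{n}\}$ and $\pi_{2}(m_{n})=\aff\{S(p_{2}),m_{n}\}$ are distinct planes, and property (**) supplies a reflection $S_{m_{n}}$, with mirror plane containing $m_{n}$, sending $\pi_{2}(m_{n})\cap K_{1}$ to $\pi_{1}(m_{n})\cap K_{1}$. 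Hence the two sections are the $\pi_{i}(m_{n})$-sections of a cylinder $C_{n}$ whose generators are parallel to a unit vector $u_{n}$ orthogonal to the mirror plane of $S_{m_{n}}$, and by the strict convexity of $K_{1}$ one has $S\partial(K_{1},u_{n})\subset \bd K_{1}\setminus C_{n}$.

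Next I would pass to the limit. Since $\Sigma\in m$ and $p_{1},S(p_{2})\in L$, both $\aff\{p_{1},m\}$ and $\aff\{S(p_{2}),m\}$ contain the line $L=L(p_{1},\Sigma)=L(S(p_{2}),\Sigma)$ and therefore equal $\Gamma$. Consequently $\pi_{1}(m_{n})\to \Gamma$ and $\pi_{2}(m_{n})\to \Gamma$ as $n\to\infty$, so the mirror plane of $S_{m_{n}}$, which bisects the dihedral angle between $\pi_{1}(m_{n})$ and $\pi_{2}(m_{n})$, tends to $\Gamma$ and hence $u_{n}\to u$. Taking the limit in $S\partial(K_{1},u_{n})\subset \bd K_{1}\setminus C_{n}$, exactly as in the proof of Lemma \ref{margarita}, yields $S\partial(K_{1},u)=\Gamma \cap K_{1}$. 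For the body-of-revolution conclusion I would show that every section of $K_{1}$ by a plane $\Pi$ perpendicular to $L$ and meeting $\inte K_{1}$ is a circle centered at $x:=\Pi\cap L$: given $y\in \bd(\Pi\cap K_{1})$, put $\Lambda:=\aff\{L,y\}$ and apply the first part with $\Gamma=\Lambda$ to obtain $S\partial(K_{1},w)=\Lambda\cap K_{1}$ for the unit vector $w\perp \Lambda$. Hence there is a supporting line of $K_{1}$ at $y$ parallel to $w$; viewed inside $\Pi$ it is a supporting line of $\Pi\cap K_{1}$ at $y$ perpendicular to $L(x,y)\subset \Lambda$. Since this holds at every boundary point, the classical characterization forces $\Pi\cap K_{1}$ to be a circle centered at $x$, so $K_{1}$ is a body of revolution with axis $L$.

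The main obstacle is to justify the limiting step: one must check that the mirrors of the $S_{m_{n}}$ converge cleanly to $\Gamma$, and this in turn requires $\pi_{1}(m_{n})$ and $\pi_{2}(m_{n})$ to share a common limit. This is exactly where the hypothesis $L\perp H$ is essential, since it forces $\Sigma\in m$ and hence ensures that both $\aff\{p_{1},m\}$ and $\aff\{S(p_{2}),m\}$ contain $L$ and therefore coincide with $\Gamma$. Without this coincidence of limits the argument of Lemma \ref{margarita} would collapse, which explains why this lemma is stated and proved separately from the non-perpendicular case.
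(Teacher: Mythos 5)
Your proposal is correct and is exactly the adaptation the paper intends: its own proof of this lemma consists of the single remark that the argument is analogous to that of Lemma \ref{margarita}, and you carry out that analogy faithfully, replacing $M_1$ by $L$, property (*) by property (**), and correctly identifying that $L\perp H$ forces $\Sigma\in m$ so that both $\aff\{p_1,m\}$ and $\aff\{S(p_2),m\}$ coincide with $\Gamma$, which is what makes the limiting step go through.
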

\begin{proof}
The argument of the proof is analogous to which was presented in the proof of Lemma \ref{margarita}.
\end{proof} 

\section{Proof of Theorem~1 for $n = 3$}

We now complete the proof of Theorem~1 in dimension three.

Let $K_1, K_2 \subset \mathbb{R}^3$ be strictly convex bodies, and let $H \subset \mathbb{R}^3$ be a plane. Let $p_1, p_2 \in \mathbb{R}^3 \setminus H$ be distinct points, and suppose that for every line $M \subset H$, there exists a reflection $S_M$ with respect to a plane through $M$ such that
\[
S_M(\pi_2(M) \cap K_2) = \pi_1(M) \cap K_1.
\]

We distinguish three cases:

\subsection*{Case 1: $p_1 \notin K_1$}

From Lemma~4, we know that $p_2 \notin K_2$, and $L := L(p_1, p_2)$ is perpendicular to $H$. From Lemma~5, the distances from $p_1$ and $p_2$ to $H$ are equal, and thus $p_1 = S(p_2)$, where $S$ is the reflection with respect to $H$. Then, by Lemma~6, $S(K_2) = K_1$.

\subsection*{Case 2: $p_1 \in \operatorname{int}(K_1)$ and $L \not\perp H$}

Suppose that $p_{1}\in K_{1}$. Then, by Lemma \ref{dey}, $p_{2}\in K_{2}$ and 
$\|p_1-\Sigma\|=\|p_2-\Sigma\|$. By Lemma \ref{margarita}, $K_1$ is a body of revolution with axis parallel to $H$. By Lemma \ref{milamores}, $K_1$ is a sphere. Now if in the Lemma \ref{milpachangas} we make $K_1=B$, $p_1=q$, $p_2=-q$ and $H=H_k$ for some $k\in \R$, $k>0$, we get that  
it does not exist a reflection $S:\Rt \rightarrow \Rt$ such that 
$S(\pi_2(M_k)\cap B)=\pi_1(M_k)\cap B$ however this would contradict the hypothesis of Theorem~1. 

\subsection*{Case 3: $p_1 \in \operatorname{int}(K_1)$ and $L \perp H$}

First we suppose that $\|p_1-\Sigma\|=\|p_2-\Sigma\|$. Since we are assuming that $L$ is perpendicular to $H$, it follows that  $S(p_{2})=p_1$. By Lemma \ref{rica}, $S(K_{2})=K_1$.

Finally we suppose that $\|p_2-\Sigma\|<\|p_1-\Sigma\|$, i.e., $S(p_{2})\not=p_1$. 
By Lemma \ref{isabella2}, $S(K_2)=K_1$. On the one hand, by Lemma \ref{margarita2}, $K_1$ is a body of revolution.  

\bigskip
 This completes the proof of Theorem~1 for $n = 3$.
\qed

\bigskip
We thank the anonymous referees for all the comments that help to improve the paper.

\end{document}